\DeclarePairedDelimiter{\floor}{\lfloor}{\rfloor}
\title[Heegaard Floer homology and knots determined by complements]{
Heegaard Floer homology and knots determined by their complements}
\author{Fyodor Gainullin}
\address{}
\email{fyodor.gainullin@gmail.com}
\DeclareRobustCommand\widecheck[1]{{\mathpalette\@widecheck{#1}}}
\def\@widecheck#1#2{%
    \setbox\z@\hbox{\m@th$#1#2$}%
    \setbox\tw@\hbox{\m@th$#1%
       \widehat{%
          \vrule\@width\z@\@height\ht\z@
          \vrule\@height\z@\@width\wd\z@}$}%
    \dp\tw@-\ht\z@
    \@tempdima\ht\z@ \advance\@tempdima2\ht\tw@ \divide\@tempdima\thr@@
    \setbox\tw@\hbox{%
       \raise\@tempdima\hbox{\scalebox{1}[-1]{\lower\@tempdima\box
\tw@}}}%
    {\ooalign{\box\tw@ \cr \box\z@}}}
\newcommand{\cA}{A^+_k(K)}
\newcommand{\cAA}{\mathcal{A}^+_{i, p/q}(K)}
\newcommand{\cB}{B^+}
\newcommand{\cBB}{\mathcal{B}^+}
\newcommand{\cv}{v_k}
\newcommand{\ch}{h_k}
\newcommand{\hv}{\boldsymbol{v}_k}
\newcommand{\hvti}{\boldsymbol{\widetilde{v}}_k}
\newcommand{\hvt}{\boldsymbol{v}_k^T}
\newcommand{\hh}{\boldsymbol{h}_k}
\newcommand{\hhti}{\boldsymbol{\widetilde{h}}_k}
\newcommand{\hht}{\boldsymbol{h}_k^T}
\newcommand{\hA}{\boldsymbol{A}^+_k(K)}
\newcommand{\hAo}{\boldsymbol{A}^+_0(K)}
\newcommand{\hB}{\boldsymbol{B}^+}
\newcommand{\T}{\mathcal{T}^+_d}
\newcommand{\Ta}{\mathcal{T}^+}
\newcommand{\hAA}{\mathbb{A}^+_{i,p/q}(K)}
\newcommand{\hBB}{\mathbb{B}^+}
\newcommand{\hAT}{\boldsymbol{A}^T_k(K)}
\newcommand{\hBT}{\boldsymbol{B}^T}
\newcommand{\hATo}{\boldsymbol{A}^T_0(K)}
\newcommand{\hAr}{\boldsymbol{A}^{red}_k(K)}
\newcommand{\hBr}{\boldsymbol{B}^{red}}
\newcommand{\hAAT}{\mathbb{A}^T_{i,p/q}(K)}
\newcommand{\hAAr}{\mathbb{A}^{red}_{i,p/q}(K)}
\newcommand{\hBBT}{\mathbb{B}^T}
\newcommand{\hBBr}{\mathbb{B}^{red}}
\newcommand{\cD}{D^+_{i,p/q}}
\newcommand{\hD}{\boldsymbol{D}^+_{i,p/q}}
\newcommand{\hDT}{\boldsymbol{D}^T_{i,p/q}}
\newcommand{\hDr}{\boldsymbol{D}^{red}_{i,p/q}}
\newcommand{\hDti}{\boldsymbol{\widetilde{D}}^+_{i,p/q}}
\newcommand{\cAn}{A^+_{\floor{\frac{i+pn}{q}}}(K)}
\newcommand{\ZZ}{\mathbb{Z}}
\newcommand{\QQ}{\mathbb{Q}}
\newcommand{\RR}{\mathbb{R}}
\newcommand{\MC}{\mathbb{X}^+_{i, p/q}}
\newcommand{\FF}{\mathbb{F}}
\newcommand{\FR}{\mathbb{F}[U]}
\newcommand{\KS}{Y_{p/q}(K)}
\newcommand{\dU}{d(L(p,q),i)}
\newcommand{\hATn}{\boldsymbol{A}^T_{\floor{\frac{i+pn}{q}}}(K)}
\newcommand{\hArn}{\boldsymbol{A}^{red}_{\floor{\frac{i+pn}{q}}}(K)}
\newcommand{\HF}{HF^+(\KS ,i)}
\newcommand{\Sp}{Spin$^c$ structure}
\newcommand{\Sps}{Spin$^c$ structures}
\newenvironment{claim}[1]{\par\noindent\underline{Claim:}\space#1}{}
\newenvironment{claimproof}[1]{\par\noindent\underline{Proof:}\space#1}{\hfill $\blacksquare$}
\def\co{\colon\thinspace}
\newtheorem{theorem}{Theorem}
\newtheorem{lemma}[theorem]{Lemma}
\newtheorem{prop}[theorem]{Proposition}
\newtheorem{cor}[theorem]{Corollary}
\newtheorem{conj}[theorem]{Conjecture}
\numberwithin{theorem}{section}
\begin{document}

\begin{abstract}
In this paper we investigate the question of when different surgeries on a knot can produce identical manifolds. We show that given a knot in a homology sphere, unless the knot is quite special, there is a bound on the number of slopes that can produce a fixed manifold that depends only on this fixed manifold and the homology sphere the knot is in. By finding a different bound on the number of slopes, we show that non-null-homologous knots in certain homology $\RR P^3$'s are determined by their complements. We also prove the surgery characterisation of the unknot for null-homologous knots in $L$-spaces. This leads to showing that all knots in some lens spaces are determined by their complements. Finally, we establish that knots of genus greater than $1$ in the Brieskorn sphere $\Sigma(2,3,7)$ are also determined by their complements.
\end{abstract}

\maketitle

\section{Introduction}
\label{sec:intro}

Dehn surgery is an important and widely used technique for constructing 3--manifolds, yet many natural questions about it are still unanswered. For example, given a knot $K$ in a manifold $Y$, how many different surgeries on $K$ can produce a fixed manifold $Z$? Conjecturally, in generic circumstances, the answer is $1$. More precisely, we have

\begin{conj}[Cosmetic surgery conjecture, see {\cite[Conjecture 6.1]{gordonICMproceedings}, \cite[Problem 1.81(A)]{kirbyProblems}} and {\cite[Conjecture 1.1]{NiWu}}]
Let $K$ be a knot in a closed connected orientable 3--manifold $Y$, such that the exterior of $K$ is irreducible and not homeomorphic to the solid torus. Suppose there are two different slopes $r_1$ and $r_2$, such that there is an orientation preserving homeomorphism between $Y_{r_1}(K)$ and $Y_{r_2}(K)$. Then the slopes $r_1$ and $r_2$ are equivalent.
\label{conj-cosmetic}
\end{conj}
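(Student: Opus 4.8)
The plan is to attack the conjecture through the Heegaard Floer correction-term ($d$-invariant) obstructions extracted from the rational surgery formula, since these are the invariants currently sharp enough to separate two slopes on the same knot. I should say at the outset that in the stated generality the conjecture is open, so what follows is really a reduction of the problem to a concrete and often decidable question about the Floer package of $K$; the partial results announced in the abstract are precisely the situations in which this reduction can be carried to completion.

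First I would record the homological constraints: an orientation-preserving homeomorphism $Y_{r_1}(K)\to Y_{r_2}(K)$ must match $|H_1|$ and the linking form, so once the solid-torus and reducible cases are excluded the admissible pairs $(r_1,r_2)$ are already severely restricted. In the classical $S^3$ setting of Ni--Wu this forces $r_2=-r_1$, and I would expect the analogue here to be that the two slopes differ only by the sign dictated by the orientation. Second, I would feed the slopes into the mapping-cone model $\MC$, whose homology computes $\HF$, so that the correction terms $\dK$ are expressed through the large-surgery pieces $\cA$, equivalently through the vertical and horizontal maps $\cv$ and $\ch$. The homeomorphism then forces an identification of \Sps{} under which all the $\dK$ agree, and this is a rigid system of equations relating the surgery data of the two slopes.

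Third, to turn these $d$-invariant equations into a genuine contradiction I would bring in the Casson--Walker invariant, which is at once a surgery obstruction with controlled behaviour under $p/q$-surgery and recoverable from the sum of the correction terms together with $\HFFr$. Comparing its values for $r_1$ and $r_2=-r_1$ should pin down the denominator and the leading behaviour of the surgery data, collapsing the system in favourable cases to the dichotomy that either the slopes are equivalent or $K$ is one of the excluded special knots.

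The main obstacle is exactly the step where the Floer data of the two slopes coincide. The sequences behind $\cv$ and $\ch$ are monotone and eventually stabilise, and for many knots they are symmetric enough that the $d$-invariants of the $r$- and $(-r)$-surgeries agree identically, so the correction terms alone \emph{cannot} distinguish the two manifolds; the unknot, all of whose opposite-slope surgeries are genuinely homeomorphic, is the boundary case any argument must respect. Breaking this symmetry requires either finer Floer input --- the graded module $\HFFr$, or the full complex $CFK^\infty(K)$ rather than its numerical shadow --- or geometric input beyond Heegaard Floer homology; and since the clean mapping-cone formula above is really only available for null-homologous knots in homology spheres, the non-null-homologous and general-$Y$ cases lose even this tool. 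I expect the Floer strategy to succeed whenever $K$ has enough genus or nontriviality to make the surgery data asymmetric --- the mechanism behind the genus-greater-than-one hypothesis for $\Sigma(2,3,7)$ --- but the fully general conjecture to need an ingredient this method does not supply.
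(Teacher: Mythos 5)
The statement you were asked to prove is Conjecture \ref{conj-cosmetic}, and the paper contains no proof of it: the author states explicitly that it is ``wide-open'' and ``not even proven for knots in $S^3$.'' So there is nothing in the paper to compare your argument against, and your submission is, as you yourself acknowledge, a strategy sketch rather than a proof. The honest self-assessment is to your credit, but the gap is not a repairable oversight in your write-up --- the step where you would need to show that the correction terms and reduced Floer homologies of $Y_{r_1}(K)$ and $Y_{r_2}(K)$ cannot all coincide unless the slopes are equivalent \emph{is} the open problem. You correctly identify the obstruction: the quantities $V_k$, $H_k$ extracted from the mapping cone are symmetric enough that the $d$-invariants of the two candidate surgeries frequently agree, and no amount of bookkeeping with the Casson--Walker invariant closes that loophole in general.

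What your sketch does track accurately is the architecture of the paper's \emph{partial} results. The combination you propose --- homological restrictions on the slopes, the mapping cone formula for $\HF$, the Casson--Walker invariant recovered from $\chi(\HFFr)$ and the sum of correction terms via equation \eqref{lambda-hf}, and the surgery formula \eqref{lambda-surg} --- is exactly the toolkit behind Proposition \ref{prop-chi-eq}, Theorem \ref{theorem-Z-special} and Theorem \ref{theorem-K-special}, which bound the number of slopes producing a fixed $Z$ rather than reducing it to one. Your remark that the method should succeed when the knot has enough genus to break the symmetry is also the mechanism behind Proposition \ref{prop-excep-K-genus} and the genus hypothesis in Theorem \ref{theorem-brieskorn}. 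So the proposal is a reasonable reading of where the field stands, but it should not be presented as a proof, and no proof of the conjecture exists in this paper or elsewhere.
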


We call two slopes equivalent if there is a homeomorphism of the knot exterior taking one to the other. If there are two distinct surgeries on $K$ (with inequivalent slopes) that produce the same oriented manifolds, then we call such surgeries purely cosmetic.

Another very natural question about knots in 3--manifolds is whether knots are determined by their complements. In other words, given two distinct knots $K_1, K_2 \subset Y$, can there exist an orientation-preserving homeomorphism between $Y \setminus K_1$ and $Y\setminus K_2$? We remark that by `distinct' here we mean that there is no orientation-preserving homeomorphism of $Y$ taking $K_1$ to $K_2$.

Given a knot $K_1 \subset Y$, we say that $K_1$ is \slshape determined by its complement \upshape if there is no other knot $K_2 \subset Y$ such that there is an orientation-preserving homeomorphism between $Y\setminus K_1$ and $Y\setminus K_2$. We say that $K_1$ is \slshape strongly \upshape determined by its complement if the condition of the previous sentence holds without the insistence on the homeomorphism to be orientation-preserving.

By \cite{edwardsConcentricity} the question for complements is equivalent to the analogous question for exteriors. It is not difficult to see then that the question of whether a knot is determined by its complement can be reformulated in terms of Dehn surgery as follows. A knot $K\subset Y$ is determined by its complement if and only if the following condition holds. If a surgery of some slope $r$ on $K$ gives $Y$, then $r$ is equivalent to the meridian of $K$.

Knots in $S^3$ are determined by their complements \cite{gordonLueckeKnotComplement} (but not strongly, as there exist chiral knots). Apart from some obvious ones, no examples of knots that are not determined by their complements have been exhibited. Thus the following conjecture seems natural

\begin{conj}[Knot complement conjecture, see {\cite[Conjecture 6.2]{gordonICMproceedings}, \cite[Problem 1.81(D)]{kirbyProblems} \cite[Conjecture 6.2]{boyerSurgerySurvey}}]
Let $K$ be a knot in a closed connected orientable 3--manifold $Y$, such that the exterior of $K$ is irreducible and not homeomorphic to the solid torus. Suppose there is a non-trivial slope $r$ such that there is an orientation preserving homeomorphism between $Y_{r}(K)$ and $Y$. Then $r$ is equivalent to the meridian of $K$.
\label{conj-complement}
\end{conj}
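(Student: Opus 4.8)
The plan is to convert the orientation-preserving homeomorphism $\KS\cong Y$ into constraints on the knot Floer complex \cfk\ through the surgery formula, and then to force those algebraic constraints back to a geometric conclusion about $K$. The first step is homological: the order and structure of $H_1(\KS)$ are governed by $p$ together with the class $[K]\in H_1(Y)$, so the requirement $H_1(\KS)\cong H_1(Y)$ cuts the candidate slopes down to a structured family. When $Y$ is a homology sphere this forces $p=\pm1$, i.e.\ $r=1/q$, with the meridian recovered as $q=0$; for general $Y$ one carries out the same bookkeeping in each $\mathrm{Spin}^c$ structure, which is where the homology class of $K$ (and hence the non-null-homologous case) enters. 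The goal is then to eliminate every candidate non-meridional slope.

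Next I would compute $\HFF$ along these slopes using the mapping cone $\MC$ assembled from the complexes $\cA$ and $\cB$ together with the maps $\cv$ and $\ch$, and compare the result to $HF^+(Y)$ in each $\mathrm{Spin}^c$ structure. Three comparisons do the work. First, the correction terms: for $q>0$ one has $\dK = d(Y,\cdot)-2V_0(K)$ (with the reflected statement for $q<0$), so matching with $Y$ forces the local $h$-invariant $V_0(K)=0$, and symmetrising over the sign of $q$ gives $H_0(K)=0$ as well. Second, the Casson--Walker invariant, which under $1/q$-filling changes by $\tfrac{q}{2}\Delta_K''(1)$, must be unchanged, forcing $\Delta_K''(1)=0$ since $q\neq0$. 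Third, and most importantly, the reduced group $\HFFr$ produced by $\MC$ grows with $|q|$ at a rate determined by the reduced homology of the summands $\cA$; since an honest homeomorphism pins $\HFFr$ to the fixed group $HF_{red}(Y)$, this growth rate must vanish, forcing every $\cA$ to be a tower with no reduced part and all local invariants $V_j=H_j=0$.

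The heart of the argument is to package these vanishing statements into the assertion that \cfk\ is filtered chain-homotopy equivalent to the complex of a trivial knot, and then to apply the fact that knot Floer homology detects the Seifert genus and fibredness. In the cleanest setting, when $Y$ is an $L$-space so that $HF_{red}(Y)=0$, the growth argument is exact and gives $\widehat{HFK}(K)$ supported in Alexander grading $0$ with unknot rank, hence $g(K)=0$; for a null-homologous knot a genus-zero Seifert surface is a disk, so $K$ lies in a ball and its exterior is either reducible (if $Y\neq S^3$) or a solid torus (if $Y=S^3$), both excluded by hypothesis. This contradiction shows that no non-meridional $1/q$ can occur, which is exactly the conjecture in this case. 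For general $Y$ one runs the $\mathrm{Spin}^c$-refined mapping cone and the rational surgery formula, comparing $\HF$ in each structure, to extract the analogous triviality of \cfk.

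The step I expect to be the main obstacle is the final passage from Floer-theoretic triviality to genuine geometric triviality, together with the homological bookkeeping for non-null-homologous $K$. Heegaard Floer homology is not a complete invariant, and the equivalence of \cfk\ with the unknot complex does not by itself force $K$ to bound a disk in an arbitrary $Y$; moreover when $[K]\neq0$ both the slope analysis and the genus-to-disk implication need to be reworked. Consequently the decisive geometric input must be supplied by the ambient manifold: the $L$-space property of lens spaces, the Seifert-fibred geometry and genus constraints available in $\Sigma(2,3,7)$, or the homological rigidity of the relevant $\RR P^3$'s. I therefore expect the clean conclusion to go through precisely in those families, with the general conjecture hinging on closing this Floer-to-geometry gap by independent means.
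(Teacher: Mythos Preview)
The statement you are attempting to prove is \emph{Conjecture}~\ref{conj-complement}; the paper does not prove it and does not claim to. It is stated as an open problem with references to Gordon, Kirby's list, and Boyer's survey, and the paper then establishes it only in restricted settings: null-homologous knots in $L$-spaces (Theorem~\ref{theorem-unknot-characterisation}), knots of genus $>1$ in $\Sigma(2,3,7)$ (Theorem~\ref{theorem-brieskorn}), non-null-homologous knots in certain homology $\RR P^3$'s (Corollary~\ref{cor-z-special-complement}), and all knots in $L(p,q)$ with $p$ square-free (Corollary~\ref{cor-lens-space-complements}). There is therefore no ``paper's own proof'' of the full statement to compare against.

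That said, your outline is a reasonable sketch of the paper's \emph{strategy} in the cases where it succeeds, and your final paragraph correctly identifies the obstruction to pushing it through in general. Two corrections to the details. First, the correction-term identity you write, $\dK = d(Y,\cdot)-2V_0(K)$, is in general only the upper bound \eqref{d-ineq-upper} of Proposition~\ref{prop-d-ineq}; equality holds for positively oriented Seifert fibred homology spheres (Corollary~\ref{cor-d-inv-seif-pos}) but not a priori otherwise, because the odd part of $HF_{red}(Y)$ can shift $d(Z,i)$ downward. Second, the ``growth with $|q|$'' heuristic is exactly what Theorem~\ref{theorem-K-special} makes precise, but that theorem does \emph{not} force every $\hA$ to be a bare tower: it only pins the dimensions of $\hAr$ to those of $HF_{red}(Y)$ in each $\ZZ_2$-grading. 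When $Y$ is not an $L$-space this leaves genuine room, and indeed the paper needs the separate grading argument of Proposition~\ref{prop-excep-K-genus} to handle genus $>1$, while genus-$1$ knots with trivial Alexander polynomial remain unresolved even in $\Sigma(2,3,7)$. So the gap you flag at the end is not merely the Floer-to-geometry step; already the Floer-theoretic constraints you list are weaker than you state once $HF_{red}(Y)\neq 0$.
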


We remark that dealing with equivalent slopes may seem complicated, but this issue does not arise at all if we can show that for no slope $r$ (other than the meridian) we have $Y_{r}(K) \cong Y$. This not only shows that $K$ is determined by its complement, but also that there is a unique slope which is equivalent to the meridian (i.e. the meridian itself). In all statements for which we will be able to show that a knot is determined by its complement, this will be the case.

Now a knot $K \subset Y$ is strongly determined by its complement if and only if it is determined by its complement and the following condition holds. If $Y_r(K)$ is homeomorphic to $-Y$ by an orientation-preserving homeomorphism, then there is an orientation-reversing homeomorphism of the exterior of $K$ that takes the meridian to $r$. For example, achiral knots in $S^3$ are strongly determined by their complements.

It is not difficult to see that Conjectures \ref{conj-cosmetic} and \ref{conj-complement} are, in fact, equivalent. However, they are not equivalent if we concentrate on a given manifold. In other words, if $Y$ in the conjectures is fixed, then they are genuinely different (Conjecture \ref{conj-cosmetic} implies Conjecture \ref{conj-complement}).

\

Conjecture \ref{conj-cosmetic} is wide-open. In contrast to Conjecture \ref{conj-complement} it is not even proven for knots in $S^3$. However, in \cite{NiWu} Ni and Wu (generalising some results in \cite{OSzRatSurg}) used Heegaard Floer homology with great success to address the cosmetic surgery conjecture in $S^3$ (or other $L$-space homology spheres). They have been able to show that

\begin{itemize}
\item many manifolds (including all Seifert fibred spaces) cannot be results of purely cosmetic surgery;
\item there are at most $2$ slopes on a knot that can yield the same (oriented) manifold by surgery and they are negatives of each other;
\item if $p/q$ is a purely cosmetic surgery slope, then $q^2 \equiv -1\ (\mbox{mod } p)$;
\item knots that admit purely cosmetic surgery satisfy certain conditions on their knot Floer homology.
\end{itemize}

In fact, it can be shown that any knot that admits a Seifert fibred surgery satisfies the cosmetic surgery conjecture (see \cite{gainullin}).

Boyer and Lines ruled out cosmetic surgeries on many knots in homology spheres. They showed \cite[Proposition 5.1]{boyerLines} that knots with $\Delta_K''(1) \neq 0$ satisfy the cosmetic surgery conjecture. Here $\Delta_K$ is the Alexander polynomial of $K$ normalised so as to be symmetric and satisfy $\Delta_K(1) = 1$.

One might try to approach Conjectures \ref{conj-cosmetic} and \ref{conj-complement} by trying to at least find some bound on the number of slopes on a knot that can yield the same manifold. (The cosmetic surgery conjecture states that this bound is $1$.) In fact, it follows from the work of Cooper and Lackenby in \cite{cooperLackenby} that, given two manifolds $Y$ and $Z$, there are only finitely many slopes $\alpha$ such that there exists a hyperbolic knot $K\subset Y$ with $Y_{\alpha}(K) = Z$.

Our first result says that given a knot $K$ in any homology sphere $Y$, if the Heegaard Floer homology of a 3--manifold $Z$ satisfies a certain property, then there is a bound on the number of slopes that can produce $Z$ that only depends on the first homology of $Z$. More precisely, we have

{
\renewcommand{\thetheorem}{\ref{theorem-Z-special}}
\begin{theorem}
Let $K$ be a knot in a homology sphere $Y$. Let $Z$ be a rational homology sphere with $|H_1(Z)| = p$ such that $p$ does not divide $\chi(HF_{red}(Z))$. Suppose that there exist $q_1$, $q_2$ such that
$$
Z = Y_{p/q_1}(K) = Y_{p/q_2}(K).
$$
Then there is no multiple of $p$ between $q_1$ and $q_2$. In particular, there are at most $\phi(p)$ surgeries on $K$ that give $Z$.
\label{theorem-Z-special}
\end{theorem}
\addtocounter{theorem}{-1}
}

Here $\phi$ denotes the Euler's totient function.

In particular, if $Z$ is a homology $\RR P^3$ (i.e. $|H_1(Z)|=2$) whose order of reduced Floer homology is odd, then it can be obtained by at most one surgery on any fixed knot in any homology sphere. A standard homological argument then gives

{
\renewcommand{\thetheorem}{\ref{cor-z-special-complement}}
\begin{cor}
Let $Z$ be a closed connected oriented manifold with $|H_1(Z)| = 2$. Suppose that $\dim(HF_{red}(Z))$ is odd. Then non-null-homologous knots in $Z$ are determined by their complements.
\label{cor-z-special-complement}
\end{cor}
\addtocounter{theorem}{-1}
}

Spaces that satisfy the conditions of Theorem \ref{theorem-Z-special} exist, some of which we exhibit in

{
\renewcommand{\thetheorem}{\ref{cor-z-special}}
\begin{cor}
Let $Z^1_m = S^2((3,-1),(2,1),(6m-2,-m))$ for odd $m \geq 3$ and $Z^2_n$ be the result of $2/n$-surgery on the figure-eight knot for any odd $n$. If $K$ is a knot in a homology sphere that gives one of $Z^1_m$ or $Z^2_n$ by surgery of some slope, then such surgery slope is unique.
\label{cor-z-special}
\end{cor}
\addtocounter{theorem}{-1}
}

Results of Ni and Wu show that Heegaard Floer homology is a relatively good invariant when restricted to the set of manifolds obtained by surgery on a fixed knot in $S^3$ in the sense that at most finitely many (i.e. $2$) of them can have the same Heegaard Floer homology (examples, when $2$ different surgeries have the same Heegaard Floer homology, do occur -- see \cite[Section 9]{OSzRatSurg}).

Let $K$ be a knot in an arbitrary homology sphere. We show that, unless $K$ is very special, the set of Heegaard Floer homologies of spaces obtained by surgery on $K$ still contains at most finitely many repetitions. In the statement below, subscripts $e$ and $o$ stand for even and odd (respectively) parts of homology groups. The rest of the notation will be explained in the next section.

For a homology sphere $Y$ and a rational homology sphere $Z$, define
$$
N(Y,Z) = 2|H_1(Z)|\dim(HF_{red}(Y))+\dim(HF_{red}(Z)).
$$

{
\renewcommand{\thetheorem}{\ref{theorem-K-special}}
\begin{theorem}
Let $Y$ be a non-$L$-space homology sphere, $Z$ be a rational homology sphere and $K \subset Y$ be a knot and suppose there are coprime integers $p,q$ such that $Z = Y_{p/q}(K)$.

If $|q|>N(Y,Z)$, then

\begin{itemize}
\item $V_0(K) = 0$;
\item $\Delta_K \equiv 1$;
\item $\dim(\hAr_e) = \dim(HF_{red}(Y)_{e})$ for all $k$;
\item $\dim(\hAr_o) = \dim(HF_{red}(Y)_{o})$ for all $k$.
\end{itemize}
\label{theorem-K-special}
\end{theorem}
\addtocounter{theorem}{-1}
}

Note that if $Y$ is an $L$-space homology sphere, $K$ a knot in it and $Z = Y_{p/q}(K)$, then by \cite[Theorem 7]{gainullin} we have $|q| \leq p + \dim(HF_{red}(Z))$\footnote{In \cite{gainullin} the theorem is stated for $Y = S^3$, but the same proof is valid for arbitrary $L$-space homology spheres.}. Therefore we lose nothing by assuming that $Y$ is not an $L$-space.

Theorem \ref{theorem-K-special} shows that for a knot to have a surgery not satisfying the bound $|q| \leq N(Y,Z)$, the knot has to be quite `special', i.e. satisfy all the four of the bullet points listed in the statement of the theorem. In the next proposition, we show that even for such `special' knots that have genus $>1$ we can provide an alternative upper bound on $q$.

For a rational homology sphere $Z$, define 

$$
\widehat{D}(Z) = \max\{\mathrm{deg}(z)-d(Z,\mathfrak{t})\ |\ \text{homogeneous } z\in HF_{red}(Z,\mathfrak{t}),\mathfrak{t}\in \mathrm{Spin}^c(Z)\}
$$

and

$$
\widecheck{D}(Z) = \min\{\mathrm{deg}(z)-d(Z,\mathfrak{t})\ |\ \text{homogeneous } z\in HF_{red}(Z,\mathfrak{t}),\mathfrak{t}\in \mathrm{Spin}^c(Z)\},
$$

where by $\mathrm{deg}(z)$ we understand the absolute grading of $z$.

{
\renewcommand{\thetheorem}{\ref{prop-excep-K-genus}}
\begin{prop}
Let $K\subset Y$ be a knot such that $Z = Y_{p/q}(K)$ for $p,q>0$ and $q>N(Y,Z)$. Suppose the genus of $K$ is larger than one. Then
$$
\floor{q/p} \leq \frac{\widehat{D}(Z)-\widecheck{D}(Y)}{2}.
$$
\label{prop-excep-K-genus}
\end{prop}
\addtocounter{theorem}{-1}
}

Thus if infinitely many spaces obtained by surgery on a knot $K$ in a homology sphere have the same Heegaard Floer homology then $K$ has genus $1$ and trivial Alexander polynomial.

\

In \cite{KMOS} Kronheimer, Mrowka, Ozsv\'ath and Szab\'o prove the following `surgery characterisation of the unknot'.

\begin{theorem}[{\cite[Theorem 1.1]{KMOS}}]
Let $U$ denote the unknot in $S^3$, and let $K$ be any knot. If there is an orientation-preserving diffeomorphism $S^3_r(K) \cong S^3_r(U)$ for some rational number $r$, then $K = U$.
\label{theorem-KMOS}
\end{theorem}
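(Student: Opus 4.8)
The statement is quoted from \cite{KMOS}, where it is proved using Seiberg--Witten monopole Floer homology; the plan I would follow instead stays inside the Heegaard Floer framework of the present paper and uses the rational surgery formula. First I would reduce to a positive slope $r = p/q > 0$. Mirroring interchanges $S^3_{p/q}(K)$ with $-S^3_{-p/q}(\overline{K})$ and fixes the unknot, so an orientation-preserving diffeomorphism for a negative slope yields one for a positive slope with $K$ replaced by its mirror $\overline{K}$; once the positive case is settled, $\overline{K} = U$ gives $K = U$.

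With $r = p/q > 0$ I would exploit that $S^3_{p/q}(U) = L(p,q)$ is a lens space, hence an L-space with $HF_{red} = 0$ and with correction terms $d(L(p,q), i)$ depending only on $p, q$. An orientation-preserving diffeomorphism $S^3_{p/q}(K) \cong L(p,q)$ then supplies two pieces of data which I would feed into the rational surgery formula of \cite{OSzRatSurg}. First, $HF_{red}(S^3_{p/q}(K)) = 0$, so every reduced group $\hAr$ entering the mapping cone must vanish; this is exactly the assertion that $K$ is an L-space knot. Second, the formula of \cite{NiWu} expresses $d(S^3_{p/q}(K), i)$ as $d(L(p,q), i) - 2\max\{V_k, V_{k'}\}$ with all $V_k \geq 0$. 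Summing this identity over all $\mathrm{Spin}^c$-structures and using that the total correction term is preserved by the orientation-preserving diffeomorphism, the nonnegativity of the $V_k$ forces each $V_k$ occurring in the formula to vanish; in particular $V_0(K) = 0$. Here the orientation-preserving hypothesis is essential: it is what makes the two families of correction terms agree with the sign that, together with $V_k \geq 0$, pins the $V_k$ to zero rather than merely bounding them.

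Finally I would combine the two outputs. For an L-space knot the knot Floer homology is a staircase determined by $\Delta_K$, and within this class $V_0(K) = 0$ forces $\Delta_K \equiv 1$ and hence genus zero; since knot Floer homology detects the Seifert genus, $g(K) = 0$ gives $K = U$. The main obstacle I anticipate is this middle step: one must either match $\mathrm{Spin}^c$-structures carefully enough to apply the correction-term formula term by term, or else argue through the diffeomorphism-invariant total correction term so as to sidestep the precise identification while still isolating $V_0$. Checking that the orientation-preserving hypothesis enters exactly at the inequality $d(S^3_{p/q}(K), i) \leq d(L(p,q), i)$ (an orientation-reversing identification would reverse it and destroy the conclusion) is the conceptual heart of the argument.
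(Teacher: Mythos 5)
The paper does not actually prove this statement: Theorem \ref{theorem-KMOS} is quoted from \cite{KMOS}, and the text explicitly defers the Heegaard Floer adaptation of the proof to \cite{OSzLecturesHF} and \cite{manion}. Your sketch is, in essence, that standard adaptation, and it is correct in outline: reduce to $r>0$ by mirroring; vanishing of $HF_{red}$ of the surgered manifold forces all the reduced groups $\boldsymbol{A}^{red}_k(K)$ to vanish, so $K$ is an $L$-space knot; the Ni--Wu correction-term formula, summed over all $\mathrm{Spin}^c$-structures so as to use only the diffeomorphism-invariant multiset of $d$-invariants, combines with $V_k\geq 0$ to force $V_0=0$; and a nontrivial $L$-space knot has $V_0\geq V_{g-1}=t_{g-1}(K)=1$, so $V_0=0$ gives $g(K)=0$ and $K=U$ by Theorem \ref{theorem-genus_detect}. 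One genuine (if minor) gap: your reduction to positive slopes silently discards $r=0$, which is a rational number covered by the statement; there $S^3_0(U)=S^2\times S^1$ is not a rational homology sphere, the correction-term machinery does not apply as written, and one must invoke Gabai's Property R or a separate argument.

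Where your route diverges from what the paper actually does is in its generalisation, Theorem \ref{theorem-unknot-characterisation}. There, instead of extracting $V_0=0$ from correction terms, the paper computes the Casson--Walker invariant from $HF^+$ via equation \eqref{lambda-hf} and applies the surgery formula \eqref{lambda-surg} to get $\Delta_K''(1)=0$; the $L$-space hypothesis again forces the staircase structure, so that $\Delta_{K,h}''(1)\geq 0$ with equality only for trivial $\Delta_{K,h}$, and the Alexander polynomial dies, whence $g(K)=0$ by the rational genus bound of Ni--Wu. That route buys two things yours does not: it requires only an abstract isomorphism of $HF^+$ rather than a diffeomorphism, and it sidesteps the $\mathrm{Spin}^c$-by-$\mathrm{Spin}^c$ bookkeeping of correction terms, which is what lets it work for null-homologous knots in arbitrary rational homology $L$-spaces. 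Your argument, conversely, is the more self-contained one for $S^3$ and coincides with the published Heegaard Floer proof of the quoted theorem.
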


Clearly, this Theorem provides an alternative proof of the fact that knots in $S^3$ are determined by their complements. The proof has been adapted to the setting of Heegaard Floer homology in \cite{OSzLecturesHF} (see also \cite{manion}). Crucial for the proof is the fact that $S^3$ is an $L$-space, i.e. it has `the smallest possible' Heegaard Floer homology.

The Brieskorn space $\Sigma(2,3,7)$ has perhaps the simplest possible Heegaard Floer homology a non-$L$-space can have. We use this to prove

{
\renewcommand{\thetheorem}{\ref{theorem-brieskorn}}
\begin{theorem}
Knots of genus larger than $1$ in the Brieskorn sphere $\Sigma(2,3,7)$ are determined by their complements. Moreover, if $K\subset \Sigma(2,3,7)$ is a counterexample to Conjecture \ref{conj-complement} then the surgery slope is integral, $\widehat{HFK}(\Sigma(2,3,7),K,1)$ has dimension $2$ and its generators lie in different $\ZZ_2$-gradings.

Non-fibred knots of genus larger than $1$ in $\Sigma(2,3,7)$ are strongly determined by their complements.
\label{theorem-brieskorn}
\end{theorem}
\addtocounter{theorem}{-1}
}

Returning to $L$-spaces, we show that Theorem \ref{theorem-KMOS} admits a generalisation as follows.

{
\renewcommand{\thetheorem}{\ref{theorem-unknot-characterisation}}
\begin{theorem}
Let $Y$ be an $L$-space and $K\subset Y$ a null-homologous knot. Suppose that
$$
HF^+(Y_{p/q}(K)) \cong HF^+(Y \# L(p,q)).
$$

Then $K$ is the unknot.

In particular, null-homologous knots in $L$-spaces are determined by their complements.
\label{theorem-unknot-characterisation}
\end{theorem}
\addtocounter{theorem}{-1}
}

In the statement above, by $L(p, q)$ we understand the result of $p/q$ surgery on the unknot in $S^3$.

Shortly after the first version of this preprint was published, Ravelomanana \cite{ravelomananaComplement} published his proof of the fact that knots in $L$-space integral homology spheres are determined by their complements, thus reproving a part of our Theorem \ref{theorem-unknot-characterisation}. In a later version of his paper, he was also able to show that knots in $\Sigma(2,3,5)$ are (in our terminology) strongly determined by their complements.

Lens spaces are $L$-spaces (indeed, they are the reason for the name), so it follows that null-homologous knots in lens spaces are determined by their complements. In fact, Mauricio has proven the above Theorem for integral slopes in \cite{mauricioThesis}, so, coupled with the Cyclic Surgery Theorem of \cite{CGLS}, Mauricio's result implies that null-homologous non-torus knots in lens spaces are determined by their complements (though Mauricio does not phrase it in this way).

Some homological arguments allow us to prove a stronger result for lens spaces. First, we need to fix some notation. Note that we will be making some arbitrary choices. Suppose $K$ is a knot in a lens space $L = L(p,q)$ and view $L$ as a union of two Heegaard solid tori $V$ and $W$. Isotope $K$ into $W$ and fix thus obtained isotopy class of $K$ in $W$. Now that $K$ is viewed as a fixed knot in $W$, it has a well-defined winding number $w$ in $W$ (i.e. the algebraic intersection number of $K$ with a meridional disc of $W$ -- it does not make sense if we allow $K$ to leave $W$). Embed $W$ into $S^3$ in a standard way. This endows both $K$ and $W$ with a preferred longitude (note, again, that this only makes sense after we fix the embedding, which is chosen arbitrarily). We use thus obtained longitude of $K$ to identify slopes with rational numbers.

Note that even though $w$ was fixed rather arbitrarily, it features in the Corollary below. This is because its remainder modulo $p$ is well-defined and the slope $n$ in the statement also depends upon the choices above.

Then we have the following result

{
\renewcommand{\thetheorem}{\ref{cor-lens-space-complements}}
\begin{cor}
If $p$ is square-free, then all knots in $L = L(p,q)$ are determined by their complements.

More precisely, let $K$ be a knot whose exterior is not a solid torus and such that a non-trivial surgery on it gives $L$. Then the exterior of $K$ is not Seifert fibred, $p|w^2$ and the surgery slope, $n$, is an integer that satisfies the following (with some choice of sign):
$$
n = -q\frac{w^2}{p}\pm 1.
$$

Moreover, there is at most one such slope (i.e. we can choose either $+$ or $-$ but not both in the equation above).
\label{cor-lens-space-complements}
\end{cor}
\addtocounter{theorem}{-1}
}

\section*{Acknowledgements}

I would like to thank Andras Juhasz and Tye Lidman for valuable comments about the earlier versions of this paper. I am very grateful to Tom Hockenhull and Marco Marengon for persistently suggesting I should write some of the results in this paper up. Special thanks go to Tye Lidman and Duncan McCoy who found a mistake in the previous version of this paper. I would also like to thank my supervisor Dorothy Buck for her continued support over the course of my Ph.D. studies.

Finally, I am very grateful to the anonymous referee. They have been very careful in reading the drafts of this paper and their detailed comments and suggestions improved the paper very substantially.

\section{Review of the mapping cone formula}
\label{sec:mapCone}

In this section, we review the mapping cone formula of \cite[Theorem 1.1]{OSzRatSurg}. We use notation largely similar to that of Ni and Wu in \cite{NiWu}.

Given a knot $K$ in a homology sphere $Y$ we can associate to it a doubly-pointed Heegaard diagram as in \cite{OSzKnotInv}. We define a complex $C = CFK^{\infty}(Y,K)$ generated (over an arbitrary field $\FF$) by elements of the form $[\boldsymbol{x}, i , j]$, where $\boldsymbol{x}$ is an `intersection point' of the Heegaard diagram (as defined in \cite{OSzKnotInv}) and $(i,j) \in \ZZ \times \ZZ$. Generators of $C$ are not all triples $[\boldsymbol{x},i,j]$, but only those that satisfy a certain condition. The differential on $C$ does not increase either $i$ or $j$, so $C$ is doubly-filtered by the pair $(i,j) \in \ZZ \times \ZZ$. The doubly-filtered chain homotopy type of this complex is a knot invariant \cite[Theorem 3.1]{OSzKnotInv}.

By \cite[Lemma 4.5]{rasmussenThesis} the complex $C$ is homotopy equivalent (as a filtered complex) to a complex for which all filtration-preserving differentials are trivial. In other words, at each filtration level, we replace the group, viewed as a chain complex with the filtration preserving differential, by its homology. From now on we work with this, \slshape reduced \upshape complex.

The complex $C$ is invariant under the shift by the vector $(-1,-1)$. Thus there is an action of a formal variable $U$ on $C$, which is simply the translation by the vector $(-1,-1)$. In other words, the group at the filtration level $(i,j)$ is the same as the one at the filtration level $(i-1, j-1)$ and $U$ is the identity map from the first one to the second. Of course, $U$ is a chain map. In $C$ the map $U$ is invertible (but note that it will not be in various subcomplexes and quotients), so $C$ is an $\mathbb{F}[U, U^{-1}]$-module.

This means that as an $\mathbb{F}[U, U^{-1}]$-module $C$ is generated by the elements with the first filtration level $i = 0$. In the reduced complex, the group at filtration level $(0, j)$ is denoted $\widehat{HFK}(Y,K,j)$ and is known as the knot Floer homology of $K$ at Alexander grading $j$.

\begin{figure}
\centering
\includegraphics[scale=0.6, clip = true, trim = 100 270 315 250]{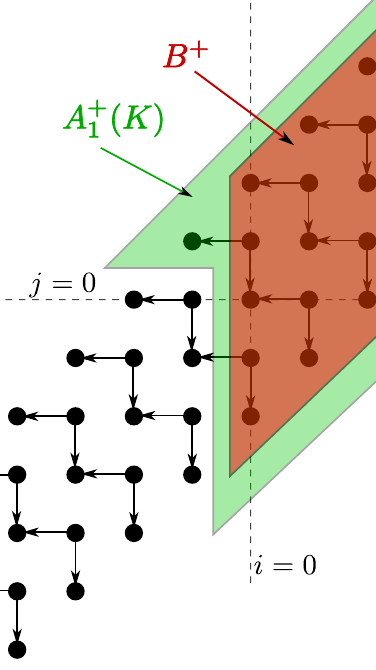}
\caption{Schematic representation of (a part of) the complex $C$ (for some genus-$2$ knot). Dots represent groups at various filtration levels and arrows stand for components of the differential. The part shaded in green (including the red part over it) is the complex $A^+_1(K)$. The part shaded red represents $B^+$.}
\label{figure-complex}
\end{figure}

The complex $C$ possesses an absolute $\QQ$-grading and a relative $\ZZ$-grading, i.e. differences of absolute $\QQ$-gradings of elements of $C$ are integers. In fact, the complex $C$ is the complex used to compute the ($\infty$-flavour of the) Heegaard Floer homology of $Y$, the knot provides an additional filtration for it. By grading the Heegaard Floer homology of $Y$ (as in \cite{OSzAbsGr}) we obtain the grading on $C$. The map $U$ decreases this grading by 2.

Using the filtration on $C$ we can define the following quotients (see Figure \ref{figure-complex}).
$$
\cA = C\{i\geq 0 \mbox{ or } j \geq k\}, \mbox{\ \ } k \in \mathbb{Z}
$$
and
$$
\cB = C\{i \geq 0\} \cong CF^+(Y).
$$

We also define two chain maps, $\cv, \ch\co \cA \to \cB$. The first one is just the projection (i.e. it sends to zero all generators with $i < 0$ and acts as the identity map for everything else). The second one is the composition of three maps: firstly we project to $C\{j \geq k\}$, then we multiply by $U^k$ (this shifts everything by the vector $(-k,-k)$) and finally we apply a chain homotopy equivalence that identifies $C\{j \geq 0\}$ with $C\{i \geq 0\}$. Such a chain homotopy equivalence exists because the two complexes both represent $CF^+(Y)$ and by general theory \cite{OSzKnotInv} there is a chain homotopy equivalence between them, induced by the moves between the Heegaard diagrams. We usually do not know the explicit form of this chain homotopy equivalence.

Knot Floer homology detects the knot genus. It does so in the following way \cite[Theorem 3.1]{niThurstonNorm}.

\begin{theorem}[Ni]
Let $Y$ be a homology sphere and $K\subset Y$ a knot. 

Then $g(K) = \max\{j \in \ZZ\ |\ \widehat{HFK}(Y,K,j) \neq 0\}$.
\label{theorem-genus_detect}
\end{theorem}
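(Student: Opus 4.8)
The plan is to prove the two inequalities $\max\{j : \widehat{HFK}(Y,K,j)\neq 0\}\leq g(K)$ and $g(K)\leq \max\{j : \widehat{HFK}(Y,K,j)\neq 0\}$ separately; the first is routine, the second carries all the content. For the upper bound I would start from a minimal genus Seifert surface $F$, with $g=g(K)$, and build a doubly-pointed Heegaard diagram subordinate to $F$ in the sense of \cite{OSzKnotInv}. In such a diagram the relative Alexander grading of an intersection point can be read off from $F$, and a standard count shows it takes values in $[-g,g]$. Hence the Alexander filtration on the reduced complex $C$ is supported in $[-g,g]$, so $\widehat{HFK}(Y,K,j)=0$ whenever $|j|>g$, which is the upper bound.

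The lower bound amounts to showing that the extremal group $\widehat{HFK}(Y,K,g)$ is nonzero, and for this I would pass to sutured Floer homology. Equip the exterior of $K$ with two oppositely oriented meridional sutures to obtain a balanced sutured manifold $(M,\gamma)$; there is then an isomorphism $SFH(M,\gamma)\cong\widehat{HFK}(Y,K)$ carrying the splitting by relative $\mathrm{Spin}^c$ structures to the Alexander grading (Juh\'asz). Decomposing $(M,\gamma)$ along the minimal genus surface $F$ produces a balanced sutured manifold $(M',\gamma')$, and by Juh\'asz's surface decomposition theorem $SFH(M',\gamma')$ is isomorphic to the summand of $SFH(M,\gamma)$ sitting in the top Alexander grading, namely $\widehat{HFK}(Y,K,g)$. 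It therefore suffices to prove $SFH(M',\gamma')\neq 0$.

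Here the minimality of the genus of $F$ is essential: since a minimal genus Seifert surface realises the Thurston norm $2g-1$ of the Seifert class, it is norm-minimizing, so by Gabai's theory the decomposed manifold $(M',\gamma')$ is taut. I would then invoke Juh\'asz's non-vanishing theorem, which asserts that a taut balanced sutured manifold has nontrivial sutured Floer homology; its proof runs a sutured manifold hierarchy (whose existence is Gabai's theorem) down to a product sutured manifold, where $SFH\cong\mathbb{F}$, and propagates non-vanishing back up through the decomposition formula. This yields $SFH(M',\gamma')\neq 0$, hence $\widehat{HFK}(Y,K,g)\neq 0$, and the lower bound follows.

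The hard part is that essentially all of the argument is carried by two black boxes from the theory of sutured Floer homology: the identification $SFH(M,\gamma)\cong\widehat{HFK}(Y,K)$ together with the surface decomposition theorem, and the non-vanishing result for taut sutured manifolds. Both rest on Gabai's deep existence theorem for sutured manifold hierarchies, and reproving them is well beyond a short argument. By contrast, the only genuinely elementary ingredient is the grading bookkeeping that identifies the outer $\mathrm{Spin}^c$ structures of the decomposition with the top Alexander grading, together with the adapted-diagram bound giving the easy inequality.
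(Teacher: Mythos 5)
This theorem is not proved in the paper at all: it is quoted as Ni's result (\cite[Theorem 3.1]{niThurstonNorm}), so there is no internal argument to compare yours against, and the fair comparison is with the proofs in the literature. Measured that way, your sketch is a correct outline of Juh\'asz's sutured-Floer proof of genus detection: the easy inequality via a Heegaard diagram adapted to a Seifert surface, and the hard inequality via the identification $SFH(M,\gamma)\cong\widehat{HFK}(Y,K)$ for the meridionally sutured exterior, the surface decomposition theorem identifying $SFH$ of the complement of $F$ with the top Alexander summand $\widehat{HFK}(Y,K,g)$, and the non-vanishing of $SFH$ for taut balanced sutured manifolds. This is a genuinely different route from the one in the cited source, which (following Ozsv\'ath--Szab\'o's ``Holomorphic disks and genus bounds'') runs Gabai's sutured hierarchy to produce a taut foliation on the zero-surgery, perturbs it \`a la Eliashberg--Thurston to a weakly fillable contact structure, and invokes the non-vanishing of Heegaard Floer invariants of symplectic fillings. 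Both arguments bottom out in Gabai's existence theorem for hierarchies; the sutured route trades the contact-geometric detour for the decomposition formula, and has the advantage of staying entirely inside Heegaard Floer theory.

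One point you should not elide: tautness requires irreducibility, and the exterior of a knot in a homology sphere need not be irreducible (the sphere itself may be reducible, or $K$ may lie in a ball). Before decomposing along $F$ you must reduce to the irreducible case, e.g.\ by writing $Y=Y'\#Y''$ with $K\subset Y'$ having irreducible exterior, running your argument there, and then using the K\"unneth formula $\widehat{HFK}(Y,K,j)\cong\widehat{HFK}(Y',K,j)\otimes\widehat{HF}(Y'')$ together with $\widehat{HF}(Y'')\neq 0$ (its Euler characteristic is $1$ for a homology sphere). There is also the routine check that $\partial F$ meets the meridional sutures in the configuration required by the decomposition theorem; Juh\'asz treats exactly this example. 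With those caveats, and granting the two black boxes you explicitly name, the argument is sound.
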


From this (together with symmetries of $C$) we can see that the maps $\cv$ (respectively $\ch$) are isomorphisms if $k \geq g$ (respectively $k \leq -g$). For example, Figure \ref{figure-complex} represents some knot of genus $2$.

We define chain complexes
$$
\cAA = \bigoplus_{n \in \mathbb{Z}}(n,\cAn), \ \cBB = \bigoplus_{n \in \mathbb{Z}}(n,\cB).
$$

The first entry in the brackets here is simply a label used to distinguish different copies of the same group. 
There is a chain map $\cD$ from $\cAA$ to $\cBB$ defined by taking sums of all maps $\cv, \ch$ with appropriate domains and requiring that the map $\cv$ goes to the group with the same label $n$ and $\ch$ increases the label by 1. Explicitly $\cD(\{(k,a_k)\}_{k \in \ZZ}) = \{(k,b_k)\}_{k \in \ZZ}$, where $b_k = v^+_{\floor{\frac{i+pk}{q}}}(a_k)+h^+_{\floor{\frac{i+p(k-1)}{q}}}(a_{k-1})$ -- see Figure \ref{cone_complex}.

\begin{figure}
\includegraphics[scale=0.8, clip = true, trim = 65 725 60 0]{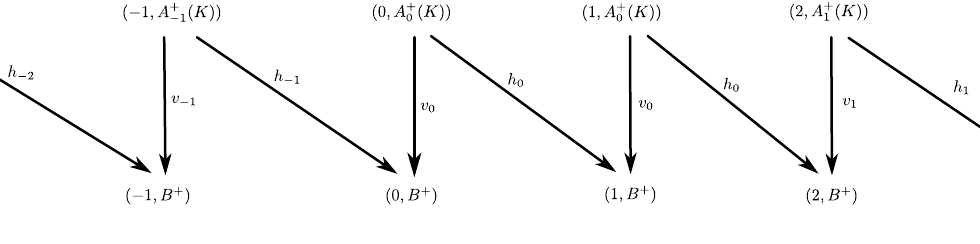}
\caption{Schematic representation of the portion of the map $\cD$ for $i = 0$ and $p/q = 2/3$.}
\label{cone_complex}
\end{figure}

Each of $\cA$ and $\cB$ inherits a relative $\ZZ$-grading from the one on $C$. Let $\MC$ denote the mapping cone of $\cD$. We fix a relative $\ZZ$-grading on the whole of it by requiring that the maps $\cv, \ch$ (and so $\cD$) decrease it by 1. The following is proven in \cite{OSzRatSurg}.

\begin{theorem}[Ozsv\'ath-Szab\'o]
\label{mapping cone}
There is a relatively graded isomorphism of $\FR$-modules
$$
H_*(\MC ) \cong HF^+( \KS , i ).
$$
\end{theorem}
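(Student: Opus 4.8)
The plan is to reduce the statement to two inputs that are local to the knot and already understood for large integer framings: the large surgery formula and the surgery exact triangle. First I would recall from \cite{OSzKnotInv} the large surgery formula, which asserts that for an integer $N \gg 0$ there is an isomorphism $HF^+(Y_N(K),[s]) \cong H_*(A^+_s(K))$ identifying each $\Sp$ of $Y_N(K)$ with a single complex $\cA$. The key geometric point is that under this identification the two maps $HF^+(Y_N(K),[s]) \to HF^+(Y)\cong H_*(\cB)$ induced by the two-handle cobordism, one for each of the two natural families of $\Sps$ on the cobordism restricting to $[s]$, are precisely the vertical and horizontal projections $\cv$ and $\ch$ defined above. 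This is the content one has to extract before any cone can be built.

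Next I would assemble the mapping cone by iterating the surgery exact triangle. The triangle relates $\HFF$ to $HF^+(Y)$ and to $HF^+$ of a neighbouring surgery whose slope differs by a unimodular change, and feeding in the large surgery identification turns its connecting homomorphisms into copies of $\cv$ and $\ch$. A purely algebraic lemma on iterated mapping cones then expresses $HF^+(\KS)$ as the homology of the total complex built from $\cAA$ and $\cBB$ with differential $\cD$, i.e. of $\MC$; the reindexing by $\floor{\frac{i+pn}{q}}$ is exactly the bookkeeping that records which complex $\cA$ occurs in which $\Sp$ of $\KS$ for the slope $p/q$. Concretely, I would check that restricting this iterated complex to the $\Sp$ labelled by $i$ reproduces the pair $(\cAA, \cBB)$ together with $\cD$.

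The truncation step makes the infinite cone computable, and this is where Theorem \ref{theorem-genus_detect} enters. Because $\cv$ is an isomorphism for $k \geq g$ and $\ch$ is an isomorphism for $k \leq -g$, the two tails of $\MC$, for $n \gg 0$ and for $n \ll 0$, are acyclic: in each tail a far-out copy of $\cB$ cancels against an adjacent $\cA$ through one of these isomorphisms. Hence $H_*(\MC)$ agrees with the homology of a finite truncation, which is precisely what the iterated exact triangle computes, and the $\Sp$ count shows that the truncated complex indexed by $i$ returns $\HF$. The relative $\ZZ$-grading assertion then follows by tracking the $\QQ$-grading shifts of the cobordism maps and using that $\cD$ was normalised to lower the grading by $1$.

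The main obstacle is the second step: upgrading the homology-level exact triangle to the chain-level statement required to form $\MC$, and in particular verifying that the holomorphic triangle maps occurring in the triangle agree with $\cv$ and $\ch$ on the nose rather than merely up to an unidentified isomorphism, and that the rational reindexing $\floor{\frac{i+pn}{q}}$ correctly distributes the complexes $\cA$ among the $\Sps$ of $\KS$. Handling $q>1$, as opposed to the integral case, is exactly the additional difficulty, since it is the only feature separating this statement from the integer surgery formula, and it is what forces the careful analysis of the triple Heegaard diagrams and the $\Sp$ structures on the intervening cobordisms.
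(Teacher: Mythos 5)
The paper does not prove this statement: it is quoted verbatim from \cite{OSzRatSurg} (``The following is proven in \cite{OSzRatSurg}''), so there is no internal argument to compare yours against. Measured against the proof in that reference, your outline captures the correct architecture for the \emph{integral} surgery formula: the large surgery identification $HF^+(Y_N(K),[s])\cong H_*(\cA)$ together with the identification of the two-handle cobordism maps with $\cv$ and $\ch$, the assembly of the cone from the surgery exact sequence, and the truncation using the fact that $\cv$ and $\ch$ are isomorphisms for $k\geq g$ and $k\leq -g$ respectively. The relative grading claim is also handled essentially as you indicate.

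However, the step you flag as the main obstacle --- handling $q>1$ --- is not an incidental difficulty to be absorbed into a more careful iteration of the exact triangle; it is resolved in \cite{OSzRatSurg} by a mechanism your proposal omits entirely. One writes $p=aq+r$ and realises $\KS$ as the \emph{integral} $a$-surgery on the connected sum $K\# O_{q/r}$ inside $Y\#(-L(q,r))$, where $O_{q/r}$ is a core of a Heegaard solid torus of $-L(q,r)$; the integer surgery formula is applied to this new knot, and the K\"unneth formula for $CFK^{\infty}$ of a connected sum shows that the $A^+$-complexes of $K\# O_{q/r}$ are precisely the complexes $\cAn$ attached to $K$. This reduction is the actual origin of the floor-function reindexing, which in your sketch is asserted to be ``bookkeeping'' without derivation, and it is also what distributes the complexes among the \Sps\ of $\KS$. (The present paper itself recalls exactly this reduction, citing \cite[Section 7]{OSzRatSurg}, in its section on null-homologous knots in $L$-spaces.) Without it, a direct iteration of the exact triangle only yields the $q=1$ case, so as written your argument has a genuine gap precisely at the point you identify.
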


The index $i$ in $HF^+( \KS , i )$ stands for a \Sp. The numbering of \Sps\ we refer to is defined in \cite{OSzRatSurg}, but we do not need precise details of how to obtain this numbering for our purposes.

We can also determine the absolute grading on the mapping cone. The group $\cBB$ is independent of the knot. Now if we insist that the absolute grading on the mapping cone for the unknot should coincide with the grading of $HF^+$ of the surgery on it (i.e. $\dU+d(Y)$), this fixes the grading on $\cBB$. We then use this grading to fix the grading on $\MC$ for arbitrary knots -- this grading then is the correct grading, i.e. it coincides with the one $HF^+$ should have.

It seems quite complicated to understand the homology of the mapping cone of $\cD$ by direct inspection. Thus we pass to homology of the objects we introduced above. Specifically, let $\hA = H_*(\cA ), \ \hB = H_*(\cB ), \ \hAA = H_*(\cAA), \hBB = H_*(\cBB)$ and let $\hv, \hh, \hD$ denote the maps induced by $\cv, \ch, \cD$ (respectively) in homology.

When we talk about $\hAA$ as an absolutely graded group, we mean the grading that it inherits from the absolute grading of the mapping cone that we described above.

Recall that the short exact sequence

{\setlength\mathsurround{0pt}
\begin{center}
\begin{tikzcd}
0 \arrow{r}
&\cBB \arrow{r}{i}
&\MC \arrow{r}{j}
&\cAA \arrow{r}
&0
\end{tikzcd}
\end{center}
}

induces the exact triangle

{\setlength\mathsurround{0pt}
\begin{center}
\begin{equation}
\begin{tikzcd}
\hAA \arrow{r}{\hD}\arrow[leftarrow]{rd}{j_*}
&\hBB \arrow{d}{i_*}\\
&H_*(\MC ) \cong \HF.
\label{extriang}
\end{tikzcd}
\end{equation}
\end{center}
}

All maps in these sequences are $U$-equivariant.

Define $\T$ to be the graded $\FR$-module $\FF[U,U^{-1}]/U\cdot \FF[U]$ with (the equivalence class of) $1$ having grading $d$ and multiplication by $U$ decreasing the grading by $2$. Similarly, let $\mathcal{T}(N)$ be the submodule of $\T$ generated by $\{1, U^{-1}, \ldots , U^{-(N-1)}\}$. We omit the subscript $d$ if the absolute grading does not exist or is not relevant. However, even without the absolute grading, these groups are still relatively $\ZZ$-graded (by requiring that $U$ decreases the grading by $2$).

If $Z$ is a rational homology sphere, then $HF^+(Z, \mathfrak{s}) = \T \oplus HF_{red}(Z, \mathfrak{s})$, where $d = d(Z, \mathfrak{s})$ is the $d$-invariant (or the correction term) of $Z$ in the \Sp\ $\mathfrak{s}$ and $HF_{red}(Z, \mathfrak{s})$ is the reduced Floer homology of $Z$ in the same \Sp. The reduced Floer homology of $Z$ is the sum of reduced Floer homologies in all \Sps, which we denote
$$
HF_{red}(Z) = \bigoplus_{\mathfrak{s}\in\mathrm{Spin}^c(Z)}HF_{red}(Z, \mathfrak{s}).
$$

For each $\mathfrak{s}\in \mathrm{Spin}^c(Z)$, $HF_{red}(Z, \mathfrak{s})$ is a finitely generated $\FR$-module in the kernel of a large enough power of $U$, thus it has the form $\bigoplus_{i=1}^m \mathcal{T}(n_i) $ for some $n_i$.

We have that $\hA \cong \hAT \oplus \hAr$ and $\hB = \hBT \oplus \hBr$, where $\hAT \cong \Ta \cong \hBT$ and $\hAr$ and $\hBr$ are finitely generated-$\FR$ modules in the kernel of a large enough power of $U$. Define
$$
\hAAT = \bigoplus_{n \in \ZZ}(n,\hATn), \mbox{ } \hAAr = \bigoplus_{n \in \ZZ}(n,\hArn),
$$

$$
\hBBT = \bigoplus_{n \in \ZZ}(n, \hBT), \mbox{ } \hBBr = \bigoplus_{n \in \ZZ}(n,\hBr).
$$

We decompose the maps in a similar manner. Let $\hD = \hDT \oplus \hDr$, where the first map is the restriction of $\hD$ to $\hAAT$ and the second one is the restriction to $\hAAr$. Let $\hvt$ and $\hht$ be the restrictions of $\hv$ and $\hh$ respectively to $\hAT$. Then $\hDT$ is defined using $\hvt$ and $\hht$ in the same way as $\hD$ is defined using $\hv$ and $\hh$.

Notice that the images of $\hvt$ and $\hht$ are contained in $\hBT$ -- this is because they are $\FR$-module maps. In fact, since these maps are homogeneous and are isomorphisms for large enough gradings, they are multiplications by some powers $U^{V_k}$ and $U^{H_k}$ for $\hvt$ and $\hht$ respectively.

The following are some useful properties of $V_k$ and $H_k$; the proofs are completely analogous to the case of knots in $S^3$, see \cite{NiWu}:

\begin{itemize}
\item $V_k = H_{-k}$ for any $k \in \ZZ$;
\item $V_k \geq V_{k+1}$ and $H_k \leq H_{k+1}$ for any $k \in \ZZ$;
%\item $H_k \leq H_{k+1}$ for any $k \in \ZZ$;
\item $V_k \to + \infty$ as $k \to - \infty$ and $H_k \to + \infty$ as $k \to + \infty$;
%\item $H_k \to + \infty$ as $k \to +\infty$;
\item $V_k = 0$ for $k\geq g(K)$ and $H_k = 0$ for $k \leq -g(K)$.
%\item $H_k = 0$ for $k \leq -g(K)$.
\label{vhprop}
\end{itemize}

In other words, $V_k$ form a non-increasing unbounded sequence of non-negative numbers, which become zero at $g(K)$ or earlier and $H_k = V_{-k}$.

\section{Correction terms}
\label{sec:corTerms}

The next lemma essentially shows that when $p, q > 0$, the map $\cD$ becomes an isomorphism `at the ends', so in the mapping cone formula we only need to consider a finite central part.

\begin{lemma}
Fix a number $G \geq g(K)$. Let $p, q > 0$. Let $\hBB_{G^+}$ and $\hBB_{G^-}$ be the subgroups of $\hBB$ consisting of all $(n, \hB)$ with $n$ satisfying
$$
\left\lfloor \frac{i+pn}{q} \right\rfloor\geq G
$$

and

$$
\left\lfloor \frac{i+p(n-1)}{q} \right\rfloor\leq -G.
$$

respectively.

\

%Similarly, let $\hBB_{G^-}$ be the subgroup of $\hBB$ consisting of all $(n, \hB)$ with $n$ satisfying

%$$
%\left\lfloor \frac{i+p(n-1)}{q} \right\rfloor\leq -G.
%$$

Let $\hAA_{G^+}$ and $\hAA_{G^-}$ be the subgroups of $\hAA$ consisting of all $(n, \hA)$ with $n$ satisfying
$$
\left\lfloor \frac{i+pn}{q} \right\rfloor \geq G
$$

and

$$
\left\lfloor \frac{i+pn}{q} \right\rfloor\leq -G
$$

respectively.
%Also, let $\hAA_{G^-}$ be the subgroup of $\hAA$ consisting of all $(n, \hA)$ with $n$ satisfying

%$$
%\left\lfloor \frac{i+pn}{q} \right\rfloor\leq -G.
%$$

Then $\hD$ maps $\hAA_{G^{\pm}}$ isomorphically onto $\hBB_{G^{\pm}}$.
\label{lemma-iso-at-ends}
\end{lemma}
\begin{proof}
The cases $n\geq 0$ and $n\leq0$ are similar, so we will only consider $n\geq 0$.

First we want to show that the image of $\hD$ is all of $\hBB_{G^+}$. Suppose $\xi \in (n, \hB)$, $n\geq 0 \mbox{ and } \floor{\frac{i+pn}{q}}\geq G \geq g(K)$.

Note that for $k \geq g(K)$, $\hv$ is an isomorphism. Moreover, if we identify each $\hA$ with $\hB$ via $\hv$ for $k \geq g(K)$, then any fixed element of $\hA$ is in the kernel of $\hh$ for big enough $k$ and $\hh$ decreases the grading by any amount we want if $k$ is big enough.

Let $\eta_0 = \boldsymbol{v}^{-1}_{\floor{\frac{i+pn}{q}}}(\xi)\in (n, \boldsymbol{A}^+_{\floor{\frac{i+pn}{q}}})$. Define $\eta_m$ inductively by
$$
\eta_m = \boldsymbol{v}^{-1}_{\floor{\frac{i+(n+m)p}{q}}}(\boldsymbol{h}_{\floor{\frac{i+(n+m-1)p}{q}}}(\eta_{m-1})).
$$

By properties of $\hv$ and $\hh$ described above, for big enough $m$ we have $\eta_m = 0$. This shows that $\xi = \hD (\sum_k \eta_k)$ is in the image of $\hD$.

Now suppose $\eta \in \hAA_{G^+}$ is in the kernel of the restriction of $\hD$ to $\hAA_{G^+}$. Then the leftmost component must be in the kernel of the corresponding map $\hv$ -- a contradiction.
\end{proof}

In the previous section, we mentioned that it is possible to put an absolute grading on $\hBB$ that does not depend on the knot we consider. In the next lemma, we explicitly describe this grading. Note that since the relative grading is already fixed it is enough to put an absolute grading on any homogeneous element of $\hBB$.

\begin{lemma}
Let $Y$ be a homology sphere. Consider the mapping cone for the \Sp\ $i$. The grading of $1$ in $(0, \hBT)$ is $d(Y)+d(L(p,q),i)-1$.
\label{lemma-unknot-d}
\end{lemma}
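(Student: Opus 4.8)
The plan is to use the fact that $\hBB$ together with its grading is independent of the knot, so that it suffices to compute with the unknot $U\subset Y$. For $U$ we have $Y_{p/q}(U)=Y\# L(p,q)$, whose tower in the Spin$^c$-structure $i$ has its bottom in grading $d(Y)+\dU$ by additivity of the $d$-invariant under connected sum ($Y$ being a homology sphere). Since $g(U)=0$, the maps $\cv,\ch$ restricted to the tower parts are multiplication by $U^{V_k}$ and $U^{H_k}$ with $V_k=\max(0,-k)$ and $H_k=\max(0,k)$. I would work only with the tower summands $\hAAT,\hBBT$, as the reduced summands feed only into $HF_{red}$ and not into the tower or its bottom.

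Write $\beta_n$ for the generator $1\in(n,\hBT)$, write $\alpha_n$ for $1\in(n,\hAT)$, set $k(n)=\floor{\frac{i+pn}{q}}$, put $b_n=\mathrm{gr}(\beta_n)$, and let $g=b_0$ be the quantity to be determined. The convention that $\cv,\ch$ lower the grading by $1$ forces, on the bottom generators,
$$\mathrm{gr}(\alpha_n)=b_n-2V_{k(n)}+1,\qquad b_{n+1}=b_n+2\big(H_{k(n)}-V_{k(n)}\big).$$
The key numerical point is that $b_n$ is minimised at $n=0$: for the standard representative $0\le i<p$ one has $k(n)\ge 0 \iff n\ge 0$, so the recursion shows $b_n$ is non-increasing for $n\le 0$ and non-decreasing for $n\ge 0$, whence $\min_n b_n=b_0=g$. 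Using $b_n-2V_{k(n)}=b_n$ when $k(n)\ge 0$ and $b_n-2V_{k(n)}=b_{n+1}$ when $k(n)<0$, the set of image gradings $\{b_n-2V_{k(n)}\}$ equals $\{b_n\}$, so $\min_n\big(b_n-2V_{k(n)}\big)=g$ and hence $\min_n\mathrm{gr}(\alpha_n)=g+1$.

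Next I would locate the output tower inside the mapping cone. For $p,q>0$ the map $\hDT$ is onto (this is the tower-part content of the construction in Lemma \ref{lemma-iso-at-ends}: one solves for preimages starting from the two ends, exactly as in the production of the $\eta_m$ there), so $\mathrm{coker}(\hDT)=0$, and the exact triangle yields a grading-preserving isomorphism $H_*(\MC)^T\cong\ker(\hDT)$ via $j_*$. One then checks that $\ker(\hDT)\cong\Ta$ with bottom generator $\eta=\sum\alpha_n$, the sum taken over the consecutive block of indices with $\mathrm{gr}(\alpha_n)=g+1$: over $\FF$ the image $\hDT(\alpha_n)$ of each such generator is a single bottom class $\beta_{(\cdot)}$ in grading $g$ (the other term vanishes, being $U$ times a bottom class), and these telescope to $0$ when summed over the block, while a rank count in grading $g+1$ shows the kernel there is one-dimensional. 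Thus the bottom of the output tower lies in grading $\mathrm{gr}(\eta)=g+1$.

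Finally, comparing with the first step, the bottom of the tower of $H_*(\MC)\cong HF^+(Y\# L(p,q),i)$ is in grading $d(Y)+\dU$, so $g+1=d(Y)+\dU$ and therefore $\mathrm{gr}(1\in(0,\hBT))=g=d(Y)+\dU-1$, as claimed. I expect the main obstacle to be the bookkeeping in the third paragraph: checking that the minimal-grading generators $\alpha_n$ occupy a single consecutive block, that their images telescope to zero (keeping track of which terms die because they are $U$ times a bottom class), and that the kernel is exactly one-dimensional in grading $g+1$ — equivalently, that the output tower is detected by $\ker(\hDT)$ rather than by $\mathrm{coker}(\hDT)$. The surjectivity of $\hDT$ for $p,q>0$ is what makes this clean, and the parity of the grading shift ($+1$) is precisely what the convention ``$\cv,\ch$ lower the grading by $1$'' contributes.
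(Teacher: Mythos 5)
Your proposal is correct and follows essentially the same route as the paper: reduce to the unknot (since the grading on $\hBB$ is knot-independent), use surjectivity of $\hD$ from Lemma \ref{lemma-iso-at-ends} to identify the tower of $HF^+(Y\#L(p,q),i)$ with a tower in $\ker(\hDT)$, and then transfer the $d$-invariant to $1\in(0,\hBT)$ via $V_0=0$ and the degree $-1$ convention. The only difference is that where the paper cites the proof of Ni--Wu's Proposition 1.6 for the existence and location of the kernel tower, you reconstruct it by hand through the grading recursion and the telescoping sum.
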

\begin{proof}
As a result of $p/q$-surgery on the unknot we get $Y\#L(p,q)$, whose correction terms are $d(Y\#L(p,q),i) = d(Y)+d(L(p,q),i)$.

By Lemma \ref{lemma-iso-at-ends} applied to the unknot (which has genus $0$), the map $\hD$ is surjective for the unknot. Thus $HF^+(Y\#L(p,q)) \cong \ker(\hD)$.

Just as in \cite[proof of Proposition 1.6]{NiWu}, there is a tower in the kernel of $\hDT$ and the element $U^{-n}$ in this tower has $U^{-n}$ as a component in $(0, \hATo)$. This shows that $1$ in $(0, \hATo)$ has grading $d(Y\#L(p,q),i) = d(Y)+d(L(p,q),i)$, so (since $V_0 = 0$) $1$ in $(0, \hBT)$ has grading $d(Y)+d(L(p,q),i)-1$.
\end{proof}

Let $Y$ be a homology sphere. Recall that its Heegaard Floer homology possesses an absolute $\ZZ_2$-grading, defined to be $0$ on the tower part and be the reduction $\mathrm{mod}\ 2$ of the relative $\ZZ$-grading\footnote{In other words, every element of the tower has grading $0$ and the grading of an element is $1$ if and only if it has odd relative $\ZZ$-grading with some element of the tower.}. Decompose the Heegaard Floer homology of $Y$ in the following way: $HF^+(Y) \cong \Ta \bigoplus_{i=1}^l \mathcal{T}(n^+_i) \bigoplus_{i=1}^m \mathcal{T}(n^-_i)$, where $\mathcal{T}(n^+_i)$ (respectively $\mathcal{T}(n^-_i)$) lie in even (respectively odd) $\ZZ_2$-grading. The following Proposition may be seen as a generalisation of \cite[Proposition 1.6]{NiWu}.

\begin{prop}
With notation as above, suppose $Z = Y_{p/q}(K)$, for $p>0$, $q>0$. Then
\begin{equation}
d(Y) + d(L(p,q),i) - 2\max\{V_{\floor{\frac{i}{q}}}, H_{\floor{\frac{i-p}{q}}}\} - 2\max_j\{n^-_j\}\leq d(Z,i)
\label{d-ineq-lower}
\end{equation}

and

\begin{equation}
d(Z,i) \leq d(Y) + d(L(p,q),i) - 2\max\{V_{\floor{\frac{i}{q}}}, H_{\floor{\frac{i-p}{q}}}\}.
\label{d-ineq-upper}
\end{equation}
\label{prop-d-ineq}
\end{prop}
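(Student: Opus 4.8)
The plan is to read the two inequalities off the mapping cone of $\hD$, generalising the computation behind Ni and Wu's Proposition 1.6. Since $p,q>0$, Lemma \ref{lemma-iso-at-ends} lets me replace $\MC$ by a finite truncation, so only the central summands matter. First I would record that the tower part $\hDT$ is surjective onto $\hBBT$: each $\hvt$ is multiplication by $U^{V_k}$, hence a surjection of towers, and one solves $\hDT(\{\eta_n\})=\xi$ recursively from below, the recursion terminating because $H_k\to+\infty$ as $k\to+\infty$, exactly as in the proof of Lemma \ref{lemma-iso-at-ends}. Consequently $\mathrm{coker}(\hD)$ admits representatives in the $U$-torsion module $\hBBr$, so it is finitely generated torsion, and the exact triangle (\ref{extriang}) identifies the non-finitely-generated ($\Ta$) part of $HF^+(Z,i)$ with that of $\ker(\hD)$, with gradings preserved by $j_*$. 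Thus $d(Z,i)$ is the bottom grading of the $\Ta$-summand of $\ker(\hD)$.

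For the upper bound (\ref{d-ineq-upper}) I observe that $\ker(\hDT)\subseteq\ker(\hD)$, because $\hvt$ and $\hht$ take values in $\hBBT$, so a pure-tower cycle stays a cycle. The bottom of the $\Ta$-summand of $\ker(\hDT)$ is computed by the same zig-zag through the sequences $V_k,H_k$ that Ni and Wu use for $S^3$; the only new input is the absolute grading of $1\in(0,\hBT)$ supplied by Lemma \ref{lemma-unknot-d}, which produces the value $d(Y)+\dU-2\max\{V_{\floor{\frac{i}{q}}},H_{\floor{\frac{i-p}{q}}}\}$. Since this $\Ta\cong\ker(\hDT)$ includes into $\ker(\hD)$ preserving gradings, the bottom of the latter's $\Ta$-summand can only be lower, which is precisely $d(Z,i)\le d(Y)+\dU-2\max\{V_{\floor{\frac{i}{q}}},H_{\floor{\frac{i-p}{q}}}\}$.

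For the lower bound (\ref{d-ineq-lower}) I would track how far the reduced homology of $Y$, which now populates the central $\hArn$ groups, can drag the bottom of the tower below the pure-tower value. The mechanism is that a tower generator of $HF^+(Z,i)$ can be continued downward only by trading against classes of $HF_{red}(Y)$; since $\hD$ lowers the absolute grading by $1$ while the tower sits in even $\ZZ_2$-grading, the classes that can sit immediately beneath a tower generator are exactly the odd ones, namely the summands $\tau(n^-_j)$, and each can contribute a downward shift of at most twice its $U$-torsion order. Bounding the total shift by $2\max_j\{n^-_j\}$ then yields (\ref{d-ineq-lower}).

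The step I expect to be the real obstacle is this last parity-and-length bookkeeping. Concretely, I must control how $HF_{red}(Y)$ embeds into the central groups $\hArn$ and how the off-diagonal components of $\hD$ from $\hAAr$ into $\hBBT$ meet the tower, in order to show that only odd-graded classes can lower the $d$-invariant and that a single such detour suffices, producing a maximum rather than a sum. The surjectivity of $\hDT$ together with the grading normalisation of Lemma \ref{lemma-unknot-d} reduces everything else to the $S^3$ computation, so it is exactly the interaction of the tower with the odd part of $HF_{red}(Y)$ that carries the new content of the proposition.
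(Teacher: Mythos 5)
Your overall route is the paper's: normalise the absolute grading on $\hBB$ via Lemma \ref{lemma-unknot-d}, locate the tower $\ker(\hDT)$ with bottom at $d^{*}:=d(Y)+\dU-2\max\{V_{\floor{\frac{i}{q}}},H_{\floor{\frac{i-p}{q}}}\}$, and compare with $d(Z,i)$ through the triangle \eqref{extriang}. The upper bound survives as you set it up, but only because your intermediate claim is an overstatement in the harmless direction: it is \emph{false} that $d(Z,i)$ equals the bottom grading of the $\Ta$-summand of $\ker(\hD)$. Since every $\hArn$ is annihilated by a uniform power of $U$, the infinitely $U$-divisible part of $\ker(\hD)$ lies in $\hAAT$, so the tower of $\ker(\hD)$ is exactly $\ker(\hDT)=\Ta_{d^{*}}$; if your equality held, the upper bound would always be an equality and no correction term $2\max_j\{n^-_j\}$ could arise. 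The correct statement is that $j_*$ maps the tower of $\HF$ \emph{onto} the tower of $\ker(\hD)$ with a possibly nontrivial finite kernel $\tau(N)$, $N=\tfrac12(d^{*}-d(Z,i))$, and the lower bound is precisely a bound on $N$.

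This is where the genuine gap sits: you look for the correction in the wrong module. You say $HF_{red}(Y)$ ``populates the central $\hArn$ groups'' and that the work lies in the off-diagonal components of $\hD$ from $\hAAr$ into $\hBBT$; neither is relevant. By exactness of \eqref{extriang}, the kernel of $j_*$ on the tower of $\HF$ is $i_*$ of a \emph{cyclic} submodule $\tau(N)$ of $\mathrm{coker}(\hD)$, and $\mathrm{coker}(\hD)$ is a quotient of $\hBB$ by a submodule containing $\hBBT$ -- so the relevant torsion lives in the copies of $HF_{red}(Y)$ inside the $B$'s, not the $A$'s. Because $\hD$ drops the grading by one, this $\tau(N)$ must have odd $\ZZ_2$-grading when lifted to $\hBB$, and the odd part of $\hBB$ is $\bigoplus_j\tau(n^-_j)$ in each summand, killed by $U^{\max_j\{n^-_j\}}$; hence $N\le\max_j\{n^-_j\}$, and the ``max rather than sum'' you flag as the main obstacle is automatic because a submodule of a single tower is cyclic. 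An argument routed through $\hAAr$ cannot produce this bound: the $U$-torsion orders occurring in $\hAr$ are not the $n^-_j$ and are not a priori controlled by them. So the step you correctly identify as carrying the new content is left unresolved, and the place you propose to resolve it would not yield the stated inequality.
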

\begin{proof}
Since the grading of $\hBB$ is independent of the knot, by Lemma \ref{lemma-unknot-d} we have that $1$ in $(0, \hBT)$ has grading $d(Y)+d(L(p,q),i)-1$. As usual, the proof subdivides into two cases, depending on whether $V_{\floor{\frac{i}{q}}} = \max\{V_{\floor{\frac{i}{q}}}, H_{\floor{\frac{i-p}{q}}}\}$ or otherwise. The two cases are analogous, so we only consider the case $V_{\floor{\frac{i}{q}}} = \max\{V_{\floor{\frac{i}{q}}}, H_{\floor{\frac{i-p}{q}}}\}$.

Then, as in \cite[proof of Proposition 1.6]{NiWu} (or see \cite[Lemmas 12-13]{gainullin}) we can show that there is a tower in the kernel of $\hDT$, such that $U^{-n}$ in this tower has $U^{-n}$ as the component in $(0, \hATo)$. Suppose that $1$ in this tower has grading $d$. Then also $1$ in $(0, \hATo)$ has grading $d$ and thus $U^{-V_{\floor{\frac{i}{q}}}} \in (0, \hATo)$ has grading $d+2V_{\floor{\frac{i}{q}}}$.

On the other hand, $U^{-V_{\floor{\frac{i}{q}}}} \in (0, \hATo)$ is mapped to $1 \in (0, \hBT)$ by $\boldsymbol{v}^+_{\floor{\frac{i}{q}}}$, which has grading $-1$. So
$$
d+2V_{\floor{\frac{i}{q}}}-1 = d(Y)+d(L(p,q),i)-1,
$$
from which it follows that $d = d(Y)+d(L(p,q),i) - 2V_{\floor{\frac{i}{q}}}$.

By the exact triangle \eqref{extriang}, everything in the kernel of $\hD$ must be in the image of $j_*$. So in particular, the tower we identified in the kernel of $\hDT$ (and thus $\hD$) must be in the image of $j_*$. At high enough gradings only the elements of the tower in $\HF$ may hit the elements of the tower in the kernel of $\hD$. Since the maps in the triangle are $U$-equivariant, the tower in $\HF$ must be mapped onto the tower in the kernel of $\hD$. It follows that
$$
d(Y)+d(L(p,q),i) - 2V_{\floor{\frac{i}{q}}} = d \geq d(Z,i).
$$

This argument also shows that the map $j_*$ has submodule $\mathcal{T}(\frac{1}{2}(d-d(Z,i)))$ in its kernel and moreover this submodule lies in $\ZZ_2$-grading $0$.

Thus there has to be a submodule $\mathcal{T}(N)$ in $\hBB$ with $N\geq \frac{1}{2}(d-d(Z,i))$ such that it is not in the image of $\hD$. Moreover, it must have odd $\ZZ_2$-grading in $\hBB$.

However, the odd part of $\hBB$ is in the kernel of $U^{\max_j\{n^-_j\}}$, so $\max_j\{n^-_j\} \geq \frac{1}{2}(d-d(Z,i))$ and therefore
$$
d(Z,i) \geq d - 2\max_j\{n^-_j\} = d(Y)+d(L(p,q),i) - 2V_{\floor{\frac{i}{q}}} -  2\max_j\{n^-_j\}.
$$

This completes the proof in the case $V_{\floor{\frac{i}{q}}} \geq H_{\floor{\frac{i-p}{q}}}$. The other case is completely analogous.
\end{proof}

The following straightforward corollary may be of interest.

\begin{cor}
Let $Y$ be a positively oriented Seifert fibred homology sphere and $K\subset Y$ a knot. Suppose $Z = Y_{p/q}(K)$, where $p/q>0$. Then
$$
d(Z,i) = d(Y) + d(L(p,q),i) - 2\max\{V_{\floor{\frac{i}{q}}}, H_{\floor{\frac{i-p}{q}}}\}.
$$
\label{cor-d-inv-seif-pos}
\end{cor}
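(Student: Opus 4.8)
The plan is to read off the equality directly from Proposition \ref{prop-d-ineq}. That proposition already sandwiches $d(Z,i)$ between
$$
d(Y)+d(L(p,q),i)-2\max\{V_{\floor{\frac{i}{q}}},H_{\floor{\frac{i-p}{q}}}\}-2\max_j\{n^-_j\}
$$
and
$$
d(Y)+d(L(p,q),i)-2\max\{V_{\floor{\frac{i}{q}}},H_{\floor{\frac{i-p}{q}}}\},
$$
so the lower and upper bounds differ \emph{only} by the term $2\max_j\{n^-_j\}$, which records the largest cyclic summand sitting in the \emph{odd} $\ZZ_2$-part of $HF_{red}(Y)$. Hence it suffices to show that this odd part is empty, i.e.\ that $\max_j\{n^-_j\}=0$; the equality then follows by squeezing the two bounds together.

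First I would recall how the parity entered the proof of Proposition \ref{prop-d-ineq}. There the only thing that could drop $d(Z,i)$ strictly below the upper bound was an obstruction submodule $\tau(N)$ in $\hBB$ that fails to lie in the image of $\hD$, and the argument showed that such a summand is forced to sit in odd $\ZZ_2$-grading. Since $\hBB$ is assembled from copies of $\hB\cong HF^+(Y)$, if $HF^+(Y)$ carries no homology in odd $\ZZ_2$-grading then there is simply no room for such an obstruction: $N$ must be $0$, and the two bounds of Proposition \ref{prop-d-ineq} coincide. Thus the whole corollary reduces to the statement that the reduced Floer homology of $Y$ is supported in even $\ZZ_2$-grading.

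The key external input, and the only place the Seifert hypothesis is used, is exactly this vanishing. For a Seifert fibred homology sphere equipped with its positive orientation, $HF^+(Y)$ is supported entirely in even $\ZZ_2$-gradings. This is a consequence of Ozsv\'ath and Szab\'o's computation of the Floer homology of negative-definite plumbed three-manifolds with at most one bad vertex: such a $Y$ bounds the (negative-definite, star-shaped) resolution plumbing, and their combinatorial computation shows $HF^+$ lives in even degrees. Translating this into the grading convention fixed just before Proposition \ref{prop-d-ineq} (the tower in grading $0$, everything else graded by the mod-$2$ reduction of the relative grading) gives precisely $m=0$ in the decomposition $HF^+(Y)\cong \Ta\bigoplus_{i=1}^l\tau(n^+_i)\bigoplus_{i=1}^m\tau(n^-_i)$, so $\max_j\{n^-_j\}=0$ and we are done.

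I expect the main obstacle to be purely a matter of aligning orientation and grading conventions: one must check that ``positively oriented'' in the statement is indeed the orientation for which $Y$ bounds a negative-definite plumbing, and that the parity convention used in the plumbing calculation matches the $\ZZ_2$-grading normalisation adopted here. Once those conventions are reconciled, no new estimate is required beyond Proposition \ref{prop-d-ineq}, and the equality is immediate.
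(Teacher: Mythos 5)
Your proposal is correct and follows essentially the same route as the paper: squeeze the two inequalities of Proposition \ref{prop-d-ineq} and observe that the gap term $2\max_j\{n^-_j\}$ vanishes because a positively oriented Seifert fibred homology sphere has no odd-graded reduced Floer homology, by Ozsv\'ath--Szab\'o's plumbing computation (the paper cites \cite[Corollary 1.4]{OSzPlumbed} for exactly this fact).
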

\begin{proof}
Positively oriented Seifert fibred homology spheres have $n^-_i = 0$ for all $i$ by \cite[Corollary 1.4]{OSzPlumbed}.
\end{proof}

\section{Surgery producing spaces with $p \not |\ \chi (HF_{red})$}
\label{sec:Z-special}

In this section, we want to prove Theorem \ref{theorem-Z-special}. We use the Casson-Walker invariant, normalised as in \cite{NiWu}. Our normalisation for the Alexander polynomial of a null-homologous knot in a rational homology also differs from that used in some other sources (in particular, \cite{walkerExtensionCasson}). Specifically, we require that the Alexander polynomial $\Delta_K$ of a null-homologous knot $K$ in a rational homology sphere $Y$ satisfies $\Delta_K(t) = \Delta_K(t^{-1})$ and $\Delta_K''(1) = |H_1(Y)|$.

To a rational homology sphere $Y$, the Casson-Walker invariant assigns a rational number $\lambda(Y)$. Two key properties we will need are as follows.

For a null-homologous knot $K$ in a rational homology sphere $Y$ we have (see \cite[Proposition 6.2]{walkerExtensionCasson} and note we are using slightly different normalisations)
\begin{equation}
\lambda(Y_{p/q}(K)) = \lambda(Y) + \lambda(L(p,q)) + \frac{q}{2p|H_1(Y)|}\Delta''_K(1).
\label{lambda-surg}
\end{equation}

The following formula appears in \cite[Theorem 3.3]{rustamov}:

\begin{equation}
|H_1(Y)|\lambda(Y) = \chi(HF_{red}(Y))-\frac{1}{2}\sum_{\mathfrak{s}\in \mathrm{Spin}^c(Y)}d(Y,\mathfrak{s}).
\label{lambda-hf}
\end{equation}

If we apply this formula to a lens space $L(p, q)$ we get

\begin{equation}
\sum_{\mathfrak{s}\in \mathrm{Spin}^c(L(p, q))}d(L(p,q),\mathfrak{s}) = -2p\lambda(L(p,q)).
\label{lens-sum}
\end{equation}

%Since for two numbers $q_1, q_2$ we have $\lambda(L(p,q_1)) = \lambda(L(p,q_2))$ we get

%\begin{equation}
%\sum_{\mathfrak{s}\in \mathrm{Spin}^c(L(p, q_1))}d(L(p,q_1),\mathfrak{s}) = \sum_{\mathfrak{s}\in \mathrm{Spin}^c(L(p, q_2))}%d(L(p,q_2),\mathfrak{s}).
%\label{lens-sum}
%\end{equation}

Another invariant we will briefly use is the Casson-Gordon invariant, $\tau$, which satisfies the following surgery formula. Suppose $W$ is an integral homology sphere and $K$ a knot in it. Then

\begin{equation}
\tau(Y_{p/q}(K)) = \tau(L(p,q)) - \sigma(K,p),
\label{tau-surg}
\end{equation}

where $\sigma(K,p)$ is a number depending only on $K$ and $p$.

Finally, both Casson-Walker and Casson-Gordon invariants of lens spaces can be expressed in terms of Dedekind sums. For our purposes, it is enough to know that a Dedekind sum assigns to a pair of coprime numbers $(p,q)$ a number $s(q,p)$. We have

\begin{equation}
\lambda(L(p,q)) = -\frac{1}{2}s(q,p),
\label{lambda-lens}
\end{equation}

and

\begin{equation}
\tau(L(p,q)) = -4ps(q,p).
\label{tau-lens}
\end{equation}

\begin{prop}
Let $Y$ be a homology sphere, $K \subset Y$ a knot and suppose there is a rational homology sphere $Z$ with
$$
Z = Y_{p/q_1}(K) = Y_{-p/q_2}(K),
$$

where $p,q_1,q_2>0$.

Then 
$$
\chi(HF_{red}(Z)) = p\chi(HF_{red}(Y)).
$$
\label{prop-chi-eq}
\end{prop}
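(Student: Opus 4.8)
The identity to be proved is an Euler-characteristic statement, so the natural tool is the Casson--Walker invariant together with Rustamov's formula \eqref{lambda-hf}, which is the bridge between $\lambda$, the reduced Floer homology, and the correction terms. My first step is to normalise the two hypotheses to positive surgeries: since $-Y$ is again a homology sphere and $-Z=(-Y)_{p/q_2}(K)$, I regard the data as the two positive surgeries $Z=Y_{p/q_1}(K)$ and $-Z=(-Y)_{p/q_2}(K)$. I will use repeatedly that $\lambda$, the total correction term $\sum_{\mathfrak{s}}d(\,\cdot\,,\mathfrak{s})$, and hence (by \eqref{lambda-hf}) also $\chi(HF_{red})$ are odd under orientation reversal, whereas $\Delta_K$ does not change when the orientation of the ambient manifold is reversed.

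The Casson--Walker step is then routine. Applying \eqref{lambda-surg} to each of the two positive surgeries, with $|H_1(Y)|=|H_1(-Y)|=1$, produces two expressions for $\lambda(Z)$ (using $\lambda(-Z)=-\lambda(Z)$ and $\lambda(-Y)=-\lambda(Y)$ for the second one). Equating them eliminates $\Delta_K''(1)$ and expresses $\lambda(Z)-\lambda(Y)$ solely through $\lambda(L(p,q_1))$ and $\lambda(L(p,q_2))$, while simultaneously recording the exact value of $\Delta_K''(1)$.

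Now feed this into Rustamov's formula \eqref{lambda-hf} for $Z$ (where $|H_1(Z)|=p$) and for $Y$. Writing $\chi(HF_{red}(Y))=\lambda(Y)+\tfrac12 d(Y)$ and $\chi(HF_{red}(Z))=p\,\lambda(Z)+\tfrac12\sum_{\mathfrak{s}}d(Z,\mathfrak{s})$, the desired equality $\chi(HF_{red}(Z))=p\,\chi(HF_{red}(Y))$ becomes equivalent to
\[
\sum_{\mathfrak{s}}d(Z,\mathfrak{s})=p\,d(Y)-2p\bigl(\lambda(Z)-\lambda(Y)\bigr).
\]
The right-hand side is already known from the Casson--Walker step, and applying \eqref{lambda-hf} to the $L$-spaces $L(p,q_j)$ gives $\sum_{\mathfrak{s}}d(L(p,q_j),\mathfrak{s})=-2p\,\lambda(L(p,q_j))$, so everything is reduced to an exact evaluation of the total correction term $\sum_{\mathfrak{s}}d(Z,\mathfrak{s})$.

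This evaluation is the crux and the step I expect to be the main obstacle. Summing the correction-term surgery formula of Proposition \ref{prop-d-ineq} over the $p$ different $\mathrm{Spin}^c$ structures expresses $\sum_{\mathfrak{s}}d(Z,\mathfrak{s})$ as $p\,d(Y)+\sum_{\mathfrak{s}}d(L(p,q_1),\mathfrak{s})$ minus a knot-dependent correction built from the $V_k,H_k$ and from the reduced error terms (the $\max_j\{n^-_j\}$ discrepancy in Proposition \ref{prop-d-ineq}). A single surgery does \emph{not} force this correction to coincide with the Casson--Walker contribution $q_1\Delta_K''(1)$, so the two descriptions must genuinely be played against each other. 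The delicate points are (i) that the reduced error terms cancel between the $Y$-side and the orientation-reversed $-Y$-side rather than merely being bounded, and (ii) the $\mathrm{Spin}^c$-relabelling and sign bookkeeping under orientation reversal. This is where I would bring in the Casson--Gordon surgery formula \eqref{tau-surg}: because its knot correction $\sigma(K,p)$ is independent of $q$, comparing the two surgery descriptions, together with the Dedekind-sum evaluations \eqref{lambda-lens} and \eqref{tau-lens} and Dedekind reciprocity, should supply the extra arithmetic relation needed to pin down the total correction term and close the computation.
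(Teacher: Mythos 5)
Your outline follows the paper's proof almost exactly --- Casson--Walker plus Rustamov's formula \eqref{lambda-hf}, the orientation-reversal trick, the $d$-invariant surgery bounds of Proposition \ref{prop-d-ineq}, and the Casson--Gordon invariant --- but you leave the step you call the crux unexecuted, and you misjudge where the difficulty sits; in fact that step is the easiest one. The Casson--Gordon input should come first and is a one-liner: applying \eqref{tau-surg} to the two surgery descriptions of $Z$ gives $\tau(L(p,q_1))-\sigma(K,p)=\tau(Z)=\tau(L(p,-q_2))-\sigma(K,p)$, hence $\tau(L(p,q_1))=\tau(L(p,-q_2))$, and then \eqref{tau-lens} and \eqref{lambda-lens} give $\lambda(L(p,q_1))=\lambda(L(p,-q_2))$ directly --- no Dedekind reciprocity is needed. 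Fed into your two Casson--Walker expressions this forces $\Delta_K''(1)=0$ outright rather than merely ``recording its value''. The evaluation of $\sum_{\mathfrak{s}}d(Z,\mathfrak{s})$ then requires none of the cancellations you worry about in (i): one only uses the knot-\emph{independent} consequence of \eqref{d-ineq-upper}, namely $d(Z,i)\leq d(Y)+d(L(p,q_1),i)$ (the $-2\max\{V,H\}$ term is nonpositive and is simply dropped, and the $\max_j\{n^-_j\}$ terms from \eqref{d-ineq-lower} never enter), together with the same inequality applied to $-Z=(-Y)_{p/q_2}(K)$, which gives $d(Z,i)\geq d(Y)+d(L(p,-q_2),i)$. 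Summing over $\mathrm{Spin}^c$ structures and using $\sum_{\mathfrak{s}}d(L(p,q),\mathfrak{s})=-2p\lambda(L(p,q))$ for lens spaces, the upper and lower bounds coincide precisely because of the lens-space equality above, pinning down $\sum_{\mathfrak{s}}d(Z,\mathfrak{s})=pd(Y)-2p\lambda(L(p,q_1))$ exactly; with $\Delta_K''(1)=0$ this is the value your reduction demands, and the argument closes. So the architecture is right, but the ``extra arithmetic relation'' you hope for is nothing more than the $q$-independence of $\sigma(K,p)$ in \eqref{tau-surg}, and as written your proposal has a genuine gap only in the sense that its decisive step is deferred to a hope rather than carried out.
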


\begin{proof}
By combining equations \eqref{tau-surg} and \eqref{tau-lens} we get $s(q_1,p) = s(-q_2,p)$. From this and equation \eqref{lambda-lens} we get $\lambda(L(p,q_1)) = \lambda(L(p,-q_2))$.

Now equation \eqref{lambda-surg} implies that 
$$
\frac{q_1}{2p|H_1(Z)|}\Delta''_K(1) = \frac{-q_2}{2p|H_1(Z)|}\Delta''_K(1),
$$

from which it follows that $\Delta''_K(1) = 0$.

Formula \eqref{d-ineq-upper} gives $d(Z,i) \leq d(Y) + d(L(p,q_1),i)$. If we can get $Z$ from $Y$ by $-\frac{p}{q_2}$-surgery, then by reversing orientations we see, that we can get $-Z$ from $-Y$ by $\frac{p}{q_2}$-surgery. Using formula \eqref{d-ineq-upper} then gives $d(-Z,i) \leq d(-Y) + d(L(p,q_2),i)$, which yields $-d(Z,i) \leq -d(Y) - d(L(p,-q_2),i) \Rightarrow d(Z,i) \geq d(Y) + d(L(p,-q_2),i)$.

Summing over all \Sps\ yields
\begin{equation}
\sum_{\mathfrak{s}\in \mathrm{Spin}^c(Z)}d(Z,\mathfrak{s}) \leq pd(Y) + \sum_{\mathfrak{s}\in \mathrm{Spin}^c(L(p, q_1))}d(L(p, q_1),\mathfrak{s}),
\label{d-sum-lower}
\end{equation}

and

\begin{equation}
\sum_{\mathfrak{s}\in \mathrm{Spin}^c(Z)}d(Z,\mathfrak{s}) \geq pd(Y) + \sum_{\mathfrak{s}\in \mathrm{Spin}^c(L(p, -q_2))}d(L(p,-q_2),\mathfrak{s}).
\label{d-sum-upper}
\end{equation}

By equation \eqref{lens-sum} the two last terms (sums of $d$-invariants for lens spaces) in the two equations above are equal.

The inequalities \eqref{d-sum-lower} and \eqref{d-sum-upper} now imply
\begin{multline}
\sum_{\mathfrak{s}\in \mathrm{Spin}^c(Z)}d(Z,\mathfrak{s}) = pd(Y) -2p\lambda(L(p,q_1)) = pd(Y)-2p\lambda(L(p,-q_2)).
\label{d-eq}
\end{multline}

Now using \eqref{lambda-hf} we get
$$
\chi(HF_{red}(Z)) - \frac{1}{2}\sum_{\mathfrak{s}\in \mathrm{Spin}^c(Z)}d(Z,\mathfrak{s}) = p\lambda(Z).
$$

Using \eqref{lambda-surg} and the fact that $\Delta''_K(1) = 0$ we obtain
$$
p\lambda(Z) = p\lambda(Y) + p\lambda(L(p,q_1)).
$$

Applying \eqref{lambda-hf} to $Y$ gives us
\begin{multline*}
p\lambda(Z) = p\chi(HF_{red}(Y)) - \frac{p}{2}d(Y) +p\lambda(L(p,q_1)) = \\
= p\chi(HF_{red}(Y)) -\frac{1}{2}(pd(Y) - 2p\lambda(L(p,q_1))).
\end{multline*}

Substituting from \eqref{d-eq} and combining the equalities we arrive at the following
$$
\chi(HF_{red}(Z)) - \frac{1}{2}\sum_{\mathfrak{s}\in \mathrm{Spin}^c(Z)}d(Z,\mathfrak{s}) = p\chi(HF_{red}(Y))- \frac{1}{2}\sum_{\mathfrak{s}\in \mathrm{Spin}^c(Z)}d(Z,\mathfrak{s}),
$$
from which the conclusion of the proposition follows.
\end{proof}

Since this proposition holds for arbitrary homology spheres, we have relative freedom to `change coordinates', i.e. to see a surgery on a knot in some homology sphere as a surgery on its dual in another homology sphere. This is the essence of what is going on in the next lemma.

\begin{lemma}
Let $K$ be a knot in a homology sphere $Y$ and suppose for some rational homology sphere $Z$
$$
Z = Y_{p/q_1}(K) = Y_{p/q_2}(K).
$$

Suppose further that there exists $k\in \ZZ$, such that $q_1<pk<q_2$. Then
$$
p|\chi(HF_{red}(Z)).
$$
\label{lemma-chi-divis}
\end{lemma}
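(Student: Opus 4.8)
The plan is to reduce everything to Proposition \ref{prop-chi-eq} by reinterpreting the two given surgeries on $K\subset Y$ as two surgeries of \emph{opposite} signs on a dual knot living in a new homology sphere. The hypothesis $q_1<pk<q_2$ is exactly what is needed to pass to coordinates in which the two surgery slopes straddle zero, so that the opposite-sign hypothesis of Proposition \ref{prop-chi-eq} is met. We may assume $p>0$ (divisibility by $p$ is divisibility by $|p|$).

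First I would set $W=Y_{1/k}(K)$. Since $1/k$-surgery on a knot in a homology sphere again yields a homology sphere, $W$ is a homology sphere (for $k=0$ we simply take $W=Y$). Let $K'\subset W$ be the dual knot, i.e. the core of the filling solid torus, so that the exterior $M=W\setminus N(K')$ is canonically identified with $Y\setminus N(K)$. Next I would track the slopes under this identification. Fix the basis $(\mu,\lambda)$ of $H_1(\partial M)$ given by the meridian $\mu$ of $K$ and the canonical (homologically trivial in $M$) longitude $\lambda$. The meridian of $K'$ is the filling slope $\mu+k\lambda$, while the canonical longitude of $K'$ is again $\lambda$, since it is determined by $M$ alone and $M$ is unchanged. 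Inverting, $\mu=\mu_{K'}-k\lambda_{K'}$, so the slope $p\mu+q\lambda$ of $Z$ becomes $p\mu_{K'}+(q-pk)\lambda_{K'}$; as $\gcd(p,q-pk)=\gcd(p,q)=1$ this is a reduced slope and
$$
Z=W_{p/(q-pk)}(K').
$$
Applying this to $q_1$ and $q_2$ gives $Z=W_{p/(q_1-pk)}(K')=W_{p/(q_2-pk)}(K')$, and the hypothesis $q_1<pk<q_2$ forces $q_1-pk<0<q_2-pk$.

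Finally, writing $q_1-pk=-r_1$ and $q_2-pk=r_2$ with $r_1,r_2>0$, we obtain $Z=W_{-p/r_1}(K')=W_{p/r_2}(K')$, which is precisely the hypothesis of Proposition \ref{prop-chi-eq} for the homology sphere $W$ and the (automatically null-homologous) knot $K'$. That proposition then yields $\chi(HF_{red}(Z))=p\,\chi(HF_{red}(W))$, and since $\chi(HF_{red}(W))\in\ZZ$ we conclude $p\mid\chi(HF_{red}(Z))$.

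The main obstacle is the bookkeeping in the dual-knot reparametrisation: verifying that the canonical longitude is genuinely preserved by the identification of exteriors, and that the change of basis $(\mu,\lambda)\mapsto(\mu_{K'},\lambda_{K'})$ lies in $SL_2(\ZZ)$ so that it respects orientations. This is what guarantees that the homeomorphisms $Z\cong W_{p/(q_i-pk)}(K')$ remain orientation-preserving and that Proposition \ref{prop-chi-eq} applies verbatim. Everything else is formal once the slope computation is in place.
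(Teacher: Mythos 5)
Your proposal is correct and follows essentially the same route as the paper: pass to the homology sphere $Y_{1/k}(K)$, rewrite the two slopes in terms of the meridian and longitude of the dual knot so that they become $p/(q_1-pk)$ and $p/(q_2-pk)$ with opposite signs, and invoke Proposition \ref{prop-chi-eq}. The extra care you take with the longitude and the $SL_2(\ZZ)$ change of basis is a sound elaboration of the same slope bookkeeping the paper performs.
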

\begin{proof}
Consider a homology sphere $Y_1$ given by
$$
Y_1 = Y_{1/k}(K).
$$

Let $K'$ be the surgery dual of $K$ in $Y_1$. Denote by $\mu$ the meridian of $K'$ and by $m$ and $l$ the meridian and the (preferred) longitude respectively of $K$. Longitudes of $K$ and $K'$ coincide. We view the curves $\mu$, $m$ and $l$ as slopes on the boundary of $Y\setminus \mathrm{nb}(K) = Y_1 \setminus \mathrm{nb}(K')$.

We have $\mu = m + kl$. So $pm+q_1l = p\mu + (q_1-pk)l$ and $pm+q_2l = p\mu + (q_2-pk)l$. Since $q_1-pk<0<q_2-pk$, this shows that $Z$ can be obtained by both positive and negative surgery on $K'$ in $Y_1$. Then by Proposition \ref{prop-chi-eq}
$$
\chi(HF_{red}(Z)) = p\chi(HF_{red}(Y_1))\Rightarrow p|\chi(HF_{red}(Z)).
$$
\end{proof}

We are now in position to prove

\begin{theorem}
Let $K$ be a knot in a homology sphere $Y$. Let $Z$ be a rational homology sphere with $|H_1(Z)| = p$ such that $p$ does not divide $\chi(HF_{red}(Z))$. Suppose that there exist $q_1$, $q_2$ such that
$$
Z = Y_{p/q_1}(K) = Y_{p/q_2}(K).
$$
Then there is no multiple of $p$ between $q_1$ and $q_2$. In particular, there are at most $\phi(p)$ surgeries on $K$ that give $Z$.
\label{theorem-Z-special}
\end{theorem}

\begin{proof}
If $\frac{p}{q_1}, \ldots, \frac{p}{q_N}$ are distinct slopes that give $Z$ by surgery on $K$ then by Lemma \ref{lemma-chi-divis} there is $k \in \ZZ$ such that $pk < q_i < p(k+1)$ for all $i$ (clearly the case $p = 1$ is vacuous). Since $q_i$ are coprime to $p$, the conclusion follows.
\end{proof}

If there are spaces that satisfy the condition of Theorem \ref{theorem-Z-special} and have order of homology $2$ then for any knot in any homology sphere there can be at most one slope that gives such a space by surgery (since $\phi(2) = 1$). Note also that $\dim(HF_{red}(Z))\equiv \chi(HF_{red}(Z)) \ (\mbox{mod }2)$, so the condition is then equivalent to $\dim(HF_{red}(Z))$ being odd. Such spaces do exist and the next corollary demonstrates some.

\begin{cor}
Let $Z^1_m = S^2((3,-1),(2,1),(6m-2,-m))$ for odd $m \geq 3$ and $Z^2_n$ be the result of $2/n$-surgery on the figure-eight knot for any odd $n$. If $K$ is a knot in a homology sphere that gives one of $Z^1_m$ or $Z^2_n$ by surgery of some slope, then such surgery slope is unique.
\label{cor-z-special}
\end{cor}

\begin{proof}
Note that $Z^1_m$ is the result of $2/m$-surgery on the right-handed trefoil. It is enough to show that the order of $HF_{red}(Z^1_m)$ or $HF_{red}(Z^2_n)$ is odd.

The trefoil has $V_0 = 1$ and $V_1 = 0$ and all $\hAr$ trivial (since it is a genus $1$ $L$-space knot). The dimension of its reduced Floer homology can be found using the formula of \cite[Corollary 3.6]{niZhangChar} (or see \cite[Proposition 16]{gainullin} for the same formula with notation as in this paper). In this case, the dimension is $m-2$ for $m\geq 3$, which is clearly odd.

Since the figure-eight knot is alternating, we can calculate its knot Floer homology from the Alexander polynomial and the signature (see \cite[Theorem 6.1]{OSzKnotGenusMutation}). This (after some calculations) shows that for the figure-eight $\hAo \cong \T \oplus \mathcal{T}_{d - 1}(1)$ for some $d$. We also have $V_0 = 0$ and $\hAr = 0$ for $k\neq 0$. Thus the dimension of the reduced Floer homology of surgery is equal to $n$ by \cite[Proposition 5.3]{NiWu}.
\end{proof}

\section{Bound on $q$ for knots that are not too exceptional}
\label{sec:K-special}

In this section, we prove Theorem \ref{theorem-K-special}. First, we deal with knots for which the sequence $\{V_k\}_{k\geq 0}$ is not identically zero.

%\begin{lemma}
%Let $K$ be a knot in a homology sphere $Y$ with $V_0>0$. Suppose $Z = Y_{p/q}(K)$ for $p,q>0$. Then
%$$
%q\leq p+\frac{\dim(HF_{red}(Z))}{V_0}.
%$$
%\label{lemma-v0-not-0}
%\end{lemma}
%\begin{proof}
%Let $q \geq p$. It follows from \cite[Lemma 13]{gainullin} that $\mathrm{ker}(\hDT)$ (and so $\mathrm{ker}(\hD)$) contains a %submodule isomorphic to $\Ta \oplus \mathcal{T}(V_0)^{\bigoplus n_i}$, where $n_i = \#\{j\in\ZZ | 0\leq j < q, j \equiv i \ %(\mbox{mod }p)\}-1$.
%
%So $\dim(HF_{red}(Z,i)) \geq n_iV_0$. Therefore 
%$$
%\dim(HF_{red}(Z))\geq V_0\sum_{i=0}^{p-1}n_i=V_0(q-p).
%$$

%The desired inequality now follows upon rearranging the terms.
%\end{proof}

\begin{lemma}
Let $K$ be a knot in a homology sphere $Y$ with $V_0>0$. Suppose $Z = Y_{p/q}(K)$ for $p \neq 0$. Then
$$
|q|\leq |p|+\frac{\dim(HF_{red}(Z))}{V_0}.
$$
\label{lemma-v0-not-0}
\end{lemma}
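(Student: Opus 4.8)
The plan is to extract a lower bound on $\dim HF_{red}(Z)$ from the tower part of the mapping cone and then count the relevant summands. We may assume $q>p$, since otherwise $V_0(q-p)\le 0\le \dim HF_{red}(Z)$ and there is nothing to prove. Fix a spin$^c$ structure $i$. Since $HF^+(Z,i)$ is the homology of the mapping cone of $\hD$, the exact triangle \eqref{extriang} gives a short exact sequence $0\to \mathrm{coker}(\hD)\to HF^+(Z,i)\to \ker(\hD)\to 0$ in which the divisible tower $\Ta\subseteq HF^+(Z,i)$ comes from the $\ker(\hD)$ summand; hence $\mathrm{coker}(\hD)$ is bounded $U$-torsion and $\dim HF_{red}(Z,i)=\dim\mathrm{coker}(\hD)+\dim\ker_{\mathrm{red}}(\hD)$. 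Because $\hDT$ is the restriction of $\hD$ to the tower summand $\hAAT$, and $\hD,\hDT$ agree in all sufficiently high gradings (where $\hAr,\hBr$ vanish), we have $\ker(\hDT)\subseteq\ker(\hD)$ with both kernels sharing the same divisible tower. Thus $\dim HF_{red}(Z,i)\ge \dim\ker_{\mathrm{red}}(\hDT)$, and it suffices to bound $\sum_i \dim\ker_{\mathrm{red}}(\hDT)$ from below.

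Next I would compute this reduced kernel. Write $a_n=\floor{\frac{i+pn}{q}}$; on the tower summand the maps $\boldsymbol{v}^+_{a_n}$ and $\boldsymbol{h}^+_{a_n}$ are multiplication by $U^{V_{a_n}}$ and $U^{H_{a_n}}$. By Lemma \ref{lemma-iso-at-ends} with $G=g(K)$, the map $\hDT$ is an isomorphism at both ends, so we may truncate to the finite zig-zag on the surviving summands, namely those $(n,\cdot)$ with $|a_n|\le g-1$. The surviving $A$-terms then form a chain $A_{\alpha_i},\dots,A_{\beta_i}$ mapping onto the surviving $B$-terms $B_{\alpha_i+1},\dots,B_{\beta_i}$ (one more $A$ than $B$), with the relation at $B_j$ reading $U^{H_{a_{j-1}}}x_{j-1}+U^{V_{a_j}}x_j=0$. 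Solving these relations from left to right — taking $x_{\alpha_i}$ as the free generator of the surviving tower and, at each step, solving for $x_j$ (possible since $U^{V_{a_j}}$ is surjective on $\Ta$) with freedom $\ker U^{V_{a_j}}=\tau(V_{a_j})$ — exhibits $\ker(\hDT)\cong \Ta\oplus\bigoplus_{j=\alpha_i+1}^{\beta_i}\tau(V_{a_j})$, so $\dim\ker_{\mathrm{red}}(\hDT)=\sum_{j=\alpha_i+1}^{\beta_i}V_{a_j}$.

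Finally I would count. Each surviving $A$-term with $a_j\le 0$ has $V_{a_j}\ge V_0$ because the sequence $\{V_k\}$ is non-increasing. Over all $p$ values of $i$, the surviving $A$-terms with $a_j\le 0$ correspond to integers $m=i+pn$ with $(-g+1)q\le m<q$, of which there are exactly $gq$. Discarding the leftmost term of each of the $p$ chains — these are the only terms excluded from the sums above, and each has $a_j\le 0$ — leaves at least $gq-p$ such terms, each contributing $\ge V_0$. Hence $\sum_i\dim\ker_{\mathrm{red}}(\hDT)\ge (gq-p)V_0\ge (q-p)V_0$, where the last inequality uses $g\ge 1$ (which holds since $V_0>0$ forces $K$ to be nontrivial). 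Combining with the first step yields $\dim HF_{red}(Z)\ge (q-p)V_0$, i.e. $q\le p+\frac{\dim HF_{red}(Z)}{V_0}$.

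The main obstacle is the middle step: carefully justifying the truncation and the precise reduced-kernel computation for the tower zig-zag, and, in tandem with the first step, confirming that this tower kernel genuinely injects into $HF_{red}(Z)$ rather than being absorbed into the divisible tower of $HF^+(Z,i)$. The counting in the last paragraph is then straightforward bookkeeping, and the sharpness of the resulting bound is visible already for the trefoil, where $g=1$, $V_0=1$, and every surviving $A$-term has $a_n=0$, giving exactly $q-p$ contributing slots.
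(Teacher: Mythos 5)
Your proof is correct and follows essentially the same route as the paper, which simply cites Lemma~13 of \cite{gainullin} for the fact that $\ker(\hDT)$ contains $\Ta\oplus\tau(V_0)^{\oplus n_i}$ with $\sum_i n_i=q-p$ --- precisely the content of your zig-zag computation and counting (your count over all surviving terms with $a_j\le 0$ is slightly more generous than the paper's count over $a_j=0$ only, but reduces to the same bound). The one imprecision is the asserted equality $\dim HF_{red}(Z,i)=\dim\mathrm{coker}(\hD)+\dim\ker_{\mathrm{red}}(\hD)$: a graded count shows the right-hand side must be corrected by subtracting the portion of $\mathrm{coker}(\hD)$ that is absorbed into the tower of $HF^+(Z,i)$, i.e.\ half the gap between $d(Z,i)$ and the bottom of the tower in $\ker(\hD)$, so in general only the inequality $\geq$ holds. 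The estimate you actually use, $\dim HF_{red}(Z,i)\geq\dim\ker_{\mathrm{red}}(\hDT)$, is nevertheless valid: compare dimensions degree by degree in the exact triangle and invoke inequality \eqref{d-ineq-upper} to see that the tower of $\ker(\hDT)$ starts no lower than $d(Z,i)$.
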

\begin{proof}
Let $|q| \geq |p|$. It follows from \cite[Lemma 13]{gainullin} for positive surgeries and \cite[Lemma 17]{gainullin} for negative surgeries that $HF_{red}(Z,i)$ contains a submodule isomorphic to $\mathcal{T}(V_0)^{\bigoplus n_i}$, where $n_i = \#\{j\in\ZZ\ |\ 0\leq j < |q|, j \equiv i \ (\mbox{mod } |p|)\}-1$.

So $\dim(HF_{red}(Z,i)) \geq n_iV_0$. Therefore 
$$
\dim(HF_{red}(Z))\geq V_0\sum_{i=0}^{|p|-1}n_i=V_0(|q|-|p|).
$$

The desired inequality now follows upon rearranging the terms.
\end{proof}

The mapping cone complex is sometimes unnecessarily large for our purposes. By using some elementary linear algebra contained in the next lemma, we want to be able to pass to a smaller complex when necessary.

\begin{lemma}
Let $T_1$, $T_2$, $R_1$ and $R_2$ be graded vector spaces and let $f\co T_1 \to T_2$, $g\co R_1 \to R_2$ and $h\co R_1 \to T_2$ be graded linear maps. Suppose that $f$ is surjective. Then the homology of the complex
\begin{center}
{\setlength\mathsurround{0pt}
\begin{equation*}
\begin{tikzcd}
0 \arrow{r}{} &T_1 \oplus R_1 \arrow{r}{D} &T_2\oplus R_2 \arrow{r}{} &0,
\end{tikzcd}
\end{equation*}
}
\end{center}
where $D$ is given by $D = \begin{pmatrix} f & h \\ 0 & g \\ \end{pmatrix}$, is isomorphic (as a graded vector space) to the direct sum of the kernel of the map $f$ and the homology of the complex

\begin{center}
{\setlength\mathsurround{0pt}
\begin{equation*}
\begin{tikzcd}
0 \arrow{r}{} &R_1 \arrow{r}{g} &R_2 \arrow{r}{} &0.
\end{tikzcd}
\end{equation*}
}
\end{center}
\label{lemma-vec-sp}
\end{lemma}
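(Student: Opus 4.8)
The plan is to compute the kernel and cokernel of $D$ directly and to identify them with the desired pieces, using the surjectivity of $f$ as the only nontrivial input. Since the displayed complex has only two nonzero terms, its homology is $\ker D$ (in the $T_1\oplus R_1$ spot) together with $\mathrm{coker}\, D$ (in the $T_2\oplus R_2$ spot), and all the maps involved are homogeneous, so everything below is an identification of graded vector spaces.

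For the cokernel, I would first observe that $D(t,r) = (f(t)+h(r),\, g(r))$, so the image of $D$ projects onto $\mathrm{im}\, g$ in the second coordinate. Because $f$ is surjective, for any $s \in T_2$ and any $r$ we can solve $f(t) = s - h(r)$; hence a pair $(s,u)$ lies in $\mathrm{im}\, D$ precisely when $u \in \mathrm{im}\, g$. Thus $\mathrm{im}\, D = T_2 \oplus \mathrm{im}\, g$ and $\mathrm{coker}\, D = (T_2\oplus R_2)/(T_2\oplus \mathrm{im}\, g) \cong \mathrm{coker}\, g$, which is exactly the degree-zero homology of the $g$-complex.

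For the kernel, consider the projection $\pi\colon \ker D \to R_1$, $(t,r)\mapsto r$. An element $(t,r)$ lies in $\ker D$ iff $g(r)=0$ and $f(t) = -h(r)$, so the image of $\pi$ is contained in $\ker g$; and it is all of $\ker g$ because, $f$ being surjective, for every $r\in\ker g$ there is some $t$ with $f(t)=-h(r)$. The kernel of $\pi$ consists of the pairs $(t,0)$ with $f(t)=0$, i.e. a copy of $\ker f$. I therefore get a short exact sequence $0 \to \ker f \to \ker D \xrightarrow{\pi} \ker g \to 0$ of graded vector spaces (every map here is degree-preserving), which splits degreewise, giving $\ker D \cong \ker f \oplus \ker g$.

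Combining the two computations, the homology of the $D$-complex is $\ker D \oplus \mathrm{coker}\, D \cong \ker f \oplus \ker g \oplus \mathrm{coker}\, g$, which is precisely $\ker f$ together with the homology $\ker g \oplus \mathrm{coker}\, g$ of the complex $0 \to R_1 \xrightarrow{g} R_2 \to 0$. The computation is entirely elementary; the only point requiring any care is the grading bookkeeping, namely checking that $\pi$ and the inclusion of $\ker f$ are homogeneous so that the short exact sequence splits as graded vector spaces---this is immediate since $f$, $g$, $h$, and hence $D$, are graded maps. (Equivalently, one could split $T_1 = \ker f \oplus T_1'$ with $f|_{T_1'}$ an isomorphism and perform a graded change of basis $\Phi(t',r) = (t' + (f|_{T_1'})^{-1}h(r),\, r)$ to turn $D$ into a block-diagonal map, but the direct kernel--cokernel computation seems cleanest.)
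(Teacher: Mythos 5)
Your proposal is correct and follows essentially the same route as the paper: the cokernel identification is identical, and your kernel computation via the split short exact sequence $0\to\ker f\to\ker D\to\ker g\to 0$ is equivalent to the paper's explicit graded isomorphism $\theta(t,r)=(t-f^*(h(r)),r)$ built from a graded section $f^*$ of $f$ --- indeed your parenthetical change-of-basis remark is exactly that map. No gaps.
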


\begin{proof}
It is clear that the cokernels of the maps $D$ and $g$ are isomorphic as graded vector spaces -- indeed they are generated by the same elements not in the image of $g$.

We need to show that the kernels agree. Since $f$ is surjective, there is a graded map $f^*\co T_2 \to T_1$ such that $f\circ f^* = \mathrm{id}_{T_2}$. Consider the map
$$
\theta\co \ker(D) \to \ker(f)\oplus \ker(g)
$$
given by $\theta((t,r)) = (t - f^*(h(r)), r)$. It is easy to see that this map is a graded isomorphism.
\end{proof}

Let $\hvti$ be the restriction of $\hv$ to $\hAr$ followed by the projection to $\hBr$. Define $\hhti$ similarly using $\hh$. Define also $\hDti$ to be the restriction of $\hD$ to $\hAAr$ followed by the projection to $\hBBr$. The map $\hDti$ is a sum of various maps $\hvti$ and $\hhti$.

In terms of notation in Lemma \ref{lemma-vec-sp} we have $D = \hD$, $f = \hDT$, $g = \hDti$, $T_1 = \hAAT$, $T_2 = \hBBT$, $R_1 = \hAAr$ and $R_2 = \hBBr$.

\begin{lemma}
Let $Y$ be a homology sphere and $K \subset Y$ a knot with $V_0 = 0$. Define $Z = Y_{p/q}(K)$. Then $\dim(\ker(\hDti)), \dim(\mathrm{coker}(\hDti)) < \infty$,

$$
\dim(\ker(\hDti))+\dim(\mathrm{coker}(\hDti))\leq \dim(HF_{red}(Z,i))+\dim(HF_{red}(Y))
$$

\

and

$$
\dim(\ker(\hDti)) \leq \dim(HF_{red}(Z,i)).
$$
\label{lemma-ker-coker-ineq}
\end{lemma}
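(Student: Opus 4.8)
The plan is to feed the decomposition $\hD = \hDT \oplus \hDti$ set up just above into Lemma \ref{lemma-vec-sp}, taking $f = \hDT$, $g = \hDti$, and the off-diagonal map to be the restriction of $\hD$ carrying $\hAAr$ into $\hBBT$. The only hypothesis of that lemma to verify is that $f=\hDT$ is surjective. Granting this, Lemma \ref{lemma-vec-sp} together with Theorem \ref{mapping cone} gives an isomorphism of graded vector spaces
$$
HF^+(Z,i) \cong H_*(\MC) \cong \ker(\hDT)\oplus \ker(\hDti)\oplus \mathrm{coker}(\hDti),
$$
since the homology of the two-term complex $\hAAr \xrightarrow{\hDti} \hBBr$ is $\ker(\hDti)\oplus\mathrm{coker}(\hDti)$. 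Both inequalities will then be read off from this splitting.

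First I would prove that $\hDT$ is surjective, which is the crux of the argument. Since $V_0=0$ and the $V_k$ are non-increasing with $V_k=H_{-k}$, we have $V_k=0$ for all $k\ge 0$ and $H_k=0$ for all $k\le 0$. Hence the tower map $\hvt$ in the summand labelled $n$, which is multiplication by $U^{V_{\floor{\frac{i+pn}{q}}}}$, is an isomorphism whenever $\floor{\frac{i+pn}{q}}\ge 0$, and likewise $\hht$ there is an isomorphism whenever $\floor{\frac{i+pn}{q}}\le 0$. As $n$ ranges over $\ZZ$ the integer $\floor{\frac{i+pn}{q}}$ is non-decreasing and tends to $\pm\infty$, so there is a label $n_0$ to the right of which every $\hvt$ is invertible and to the left of which every $\hht$ is invertible, the two ranges overlapping. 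Given a finitely supported element of $\hBBT$, I would build a preimage exactly as in the proof of Lemma \ref{lemma-iso-at-ends}: invert $\hvt$ to solve the defining equations inward from the right-hand end, invert $\hht$ to solve them inward from the left-hand end, use the overlap of the two isomorphism ranges to match the partial solutions, and appeal to Lemma \ref{lemma-iso-at-ends} to see that the tails terminate. The hard part of the whole lemma is precisely this surjectivity; the regime $V_0>0$, where the tower part of the kernel no longer behaves so simply, is the reason the complementary case is treated separately in Lemma \ref{lemma-v0-not-0}.

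Finally I would extract the inequalities. Both $\hAAr$ and $\hBBr$ are annihilated by a large power of $U$, and $\hDti$ is an isomorphism at the two ends by Lemma \ref{lemma-iso-at-ends}, so $\ker(\hDti)$ and $\mathrm{coker}(\hDti)$ are finite-dimensional. The tower $\Ta$ inside $HF^+(Z,i)$ is infinite-dimensional, hence in the splitting above it must lie in $\ker(\hDT)$; write $\ker(\hDT)\cong \Ta\oplus R_0$ with $R_0$ finite-dimensional. Both sides of the splitting then consist of a single infinite tower $\Ta$ plus a finite-dimensional complement, and comparing the dimensions of these complements gives
$$
\dim(HF_{red}(Z,i)) = \dim R_0 + \dim(\ker(\hDti)) + \dim(\mathrm{coker}(\hDti)).
$$
Dropping the non-negative term $\dim R_0$ yields $\dim(\ker(\hDti))+\dim(\mathrm{coker}(\hDti))\le \dim(HF_{red}(Z,i))$, which already implies the first claimed inequality (the summand $\dim(HF_{red}(Y))$ is generous slack, more than enough to absorb $\dim R_0$ if one prefers not to argue that the tower-part kernel is reduced-free); dropping instead $\dim(\mathrm{coker}(\hDti))$ gives the second inequality $\dim(\ker(\hDti))\le \dim(HF_{red}(Z,i))$. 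An alternative that avoids proving surjectivity of $\hDT$ outright is to run the homology long exact sequence of the short exact sequence of two-term complexes $0\to[\hAAT\to\hBBT]\to[\hAA\to\hBB]\to[\hAAr\to\hBBr]\to 0$, bounding $\ker(\hDti)$ and $\mathrm{coker}(\hDti)$ in terms of $\ker(\hD)$, $\mathrm{coker}(\hD)$ and the cokernel of $\hDT$; this is the likely origin of the extra $\dim(HF_{red}(Y))$ term.
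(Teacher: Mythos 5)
Your overall architecture --- Lemma \ref{lemma-vec-sp} applied with $f=\hDT$ and $g=\hDti$, combined with the exact triangle \eqref{extriang} --- is the same as the paper's, and your surjectivity discussion is a side issue: for $p/q>0$ the map $\hDT$ is surjective whatever $V_0$ is (this is \cite[Lemma 12]{gainullin}), and what $V_0=0$ actually buys is that $\ker(\hDT)$ is a single tower with no extra $\tau(V_0)$ summands. The genuine gap is the step ``the tower $\Ta$ inside $HF^+(Z,i)$ \dots must lie in $\ker(\hDT)$'' and the resulting equality $\dim(HF_{red}(Z,i)) = \dim R_0 + \dim(\ker(\hDti)) + \dim(\mathrm{coker}(\hDti))$. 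The identification $HF^+(Z,i)\cong \ker(\hD)\oplus\mathrm{coker}(\hD)$ is an isomorphism of graded vector spaces coming from an extension of $\FR$-modules
$$
0\to \mathrm{coker}(\hD)\to H_*(\MC)\to \ker(\hD)\to 0
$$
that need not split over $\FR$. The tower of $HF^+(Z,i)$ begins at grading $d(Z,i)$, whereas the tower in $\ker(\hDT)$ begins at grading $d=d(Y)+d(L(p,q),i)\geq d(Z,i)$ (Proposition \ref{prop-d-ineq}); when the inequality is strict, the bottom $N=\tfrac{1}{2}(d-d(Z,i))$ levels of the $Z$-tower are carried by a copy of $\tau(N)$ inside $\mathrm{coker}(\hD)\cong\mathrm{coker}(\hDti)$, not by $\ker(\hDT)$. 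The correct count is therefore
$$
\dim(HF_{red}(Z,i)) = \dim(\ker(\hDti)) + \dim(\mathrm{coker}(\hDti)) - N,
$$
so the ``stronger'' inequality you claim, with no $\dim(HF_{red}(Y))$ term, is false whenever $N>0$ --- which inequality \eqref{d-ineq-lower} shows can occur exactly when $Y$ has odd-graded reduced Floer homology.

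The summand $\dim(HF_{red}(Y))$ that you dismiss as ``generous slack'' is precisely the bound on $N$: the module $\tau(N)$ is a cyclic $U$-module embedded in a quotient of $\hBBr=\bigoplus_n(n,\hBr)$, each summand of which is a copy of $HF_{red}(Y)$, whence $N\leq \dim(HF_{red}(Y))$ (indeed $N\leq\max_j n^-_j$). This is the content of the paper's appeal to the proof of Proposition \ref{prop-d-ineq}. Note also that your second inequality does not follow from your splitting once the count is corrected: one additionally needs $\dim(\mathrm{coker}(\hDti))\geq N$, which holds because the $\tau(N)$ above sits inside $\mathrm{coker}(\hDti)$, but this is an extra argument your write-up skips.
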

\begin{proof}
By the proof of Proposition \ref{prop-d-ineq} (specifically inequality \eqref{d-ineq-lower}) the tower in $HF^+(Z,i)$ is isomorphic to a direct sum of two pieces (one of which may be trivial). One piece is the kernel of $\hDT$ (the whole kernel, because $V_0 = 0$ -- see e.g. \cite[Lemma 13]{gainullin}). Another piece (which may be trivial) is a subspace of the co-kernel of $\hD$ (isomorphic to the co-kernel of $\hDti$) of dimension at most $\dim(HF_{red}(Y))$. 

Now by Lemma \ref{lemma-vec-sp} the whole resulting homology is isomorphic (in a graded way) to the kernel of $\hDT$ and the homology of $\hDti$ (which correspond to $f$ and $g$ in Lemma \ref{lemma-vec-sp} respectively). The paragraph above implies that the whole tower part is covered by the kernel of $\hDT$ and perhaps a piece of dimension at most $\dim(HF_{red}(Y))$. Thus the homology of $\hDti$ is finite dimensional\footnote{The structure of the graded vector spaces here allows us to talk about the dimensions: once we identified what covers the elements of high enough grading in the tower, the rest has to happen in a finite-dimensional vector space.}, so $\dim(\ker(\hDti)), \dim(\mathrm{coker}(\hDti)) < \infty$ and since the dimension of the homology of $\hDti$ equals $\dim(\ker(\hDti))+\dim(\mathrm{coker}(\hDti))$ we have

$$
\dim(\ker(\hDti))+\dim(\mathrm{coker}(\hDti))\leq \dim(HF_{red}(Z,i))+\dim(HF_{red}(Y))
$$

For the second inequality note that the reduced part of $HF^+(Z,i)$ consists of the part in the kernel and the part in the cokernel. If we forget about the part in the cokernel altogether, we can see that the kernel contributes to the dimension exactly $\dim(\ker(\hDti))$. This verifies the second inequality.
\end{proof}

For an absolutely $\ZZ_2$-graded abelian group $H$ let $H_e$ denote the subgroup of elements of grading $0$ and $H_o$ denote the subgroup of elements of grading $1$. 

Recall that we defined
$$
N(Y,Z) = 2|H_1(Z)|\dim(HF_{red}(Y))+\dim(HF_{red}(Z)).
$$

We are now ready to prove the main theorem of this section.

\begin{theorem}
Let $Y$ be a non-$L$-space homology sphere, $Z$ be a rational homology sphere and $K \subset Y$ be a knot and suppose there are coprime integers $p,q$ such that $Z = Y_{p/q}(K)$.

If $|q|>N(Y,Z)$, then

\begin{itemize}
\item $V_0(K) = 0$;
\item $\Delta_K \equiv 1$;
\item $\dim(\hAr_e) = \dim(HF_{red}(Y)_{e})$ for all $k$;
\item $\dim(\hAr_o) = \dim(HF_{red}(Y)_{o})$ for all $k$.
\end{itemize}
\label{theorem-K-special}
\end{theorem}

\begin{proof}
Suppose $|q|>N(Y,Z)$. We know from Lemma \ref{lemma-v0-not-0} that $V_0(K) = 0$. Since changing the orientation of a manifold does not change the dimension of its reduced Floer homology \cite[Proposition 2.5]{OSzPropApp}, we can assume that $p>0$ and $q>0$.

\

\begin{claim}
$\dim(\hAr_{e}) \geq \dim(\hBr_{e})$ and $\dim(\hAr_{o}) \geq \dim(\hBr_{o})$.
\label{claim1}
\end{claim}

\

\begin{claimproof}
Even and odd cases are completely analogous so we only prove $\dim(\hAr_{e}) \geq \dim(\hBr_{e})$. Suppose for contradiction that $\dim(\hBr_{e})-\dim(\hAr_{e}) \geq 1$.

Let $B_i$ be the sum of all $(n, \hBr_{e})$ with $n$ satisfying $\floor{\frac{i+pn}{q}}=\floor{\frac{i+p(n-1)}{q}}=k$. Let $A_i$ be the sum of all $(n, \hAr_{e})$ with $n$ satisfying $\floor{\frac{i+pn}{q}}=k$. Then $\hDti$ maps $A_i$ into $B_i$, so $\dim(\mathrm{coker}(\hDti))\geq \dim(B_i)-\dim(A_i)$.

Define $N_i = \{\ j \ |\ j\equiv i \ (\mbox{mod }p), \floor{\frac{j}{q}} = k\}$. Then $\dim(A_i) = N_i\dim(\hAr_{e})$ and $\dim(B_i) = (N_i-1)\dim(\hBr_{e})$.

By Lemma \ref{lemma-ker-coker-ineq} we have
\begin{multline*}
\dim(HF_{red}(Z,i))+\dim(HF_{red}(Y)) \geq \dim(\mathrm{coker}(\hDti))\geq \\
\geq N_i(\dim(\hAr_{e})-\dim(\hBr_{e}))-\dim(\hBr_{e})\geq N_i - \dim(HF_{red}(Y)).
\end{multline*}

Noting that $\sum_{i=0}^{p-1}N_i = q$ and summing over all \Sps\ we get
$$
\dim(HF_{red}(Z))+p\dim(HF_{red}(Y))\geq q - p\dim(HF_{red}(Y)),
$$

which contradicts the assumption made on $q$.
\end{claimproof}

\

\begin{claim}
$\dim(\hAr_{e}) \leq \dim(\hBr_{e})$ and $\dim(\hAr_{o}) \leq \dim(\hBr_{o})$.
\label{claim2}
\end{claim}

\

\begin{claimproof}
Again, the two cases are analogous, so we only show that $\dim(\hAr_{e}) \leq \dim(\hBr_{e})$. Suppose for a contradiction that $\dim(\hAr_{e}) - \dim(\hBr_{e}) \geq 1$.

Let $\widehat{A}_i$ be the sum of all $(n, \hAr_{e})$ with $n$ satisfying $\floor{\frac{i+pn}{q}}=\floor{\frac{i+p(n+1)}{q}}=k$. Let $\widehat{B}_i$ be the sum of all $((n, \hBr_{e})$ with $n$ satisfying $\floor{\frac{i+pn}{q}}=k$.

Clearly $\hDti$ maps $\widehat{A}_i$ into $\widehat{B}_i$, so $\dim(\ker(\hDti))\geq \dim(\widehat{A}_i)-\dim(\widehat{B}_i))$.

We have $\dim(\widehat{B}_i) = N_i\dim(\hBr_{e})$ and $\dim(\widehat{A}_i) = (N_i-1)\dim(\hAr_{e})$.

Hence by Lemma \ref{lemma-ker-coker-ineq} we have
\begin{multline*}
\dim(HF_{red}(Z,i)) \geq \dim(\ker(\hDti)) \geq \\
\geq (N_i-1)(\dim(\hAr_{e})-\dim(\hBr_{e}))-\dim(\hBr_{e}) \geq \\
\geq N_i-1 - \dim(HF_{red}(Y))
\end{multline*}

Summing over all \Sps\ we get
$$
\dim(HF_{red}(Z))\geq q-p-p\dim(HF_{red}(Y)),
$$
which is again a contradiction to the assumed inequality for $q$.
\end{claimproof}

\

Combining the results in the two Claims we see that the assumption that $q$ violates the bound in the statement of the Lemma implies that for all $k$ we have $\dim(\hAr_{e}) = \dim(\hBr_{e})$ and $\dim(\hAr_{o}) = \dim(\hBr_{o})$. Thus $\chi(\hAr)=\chi(\hBr)$ for all $k$.

Let $K$ be a knot and $\Delta_K(T) = a_0 + \sum_i a_i (T^i+T^{-i})$ be its symmetrised Alexander polynomial, with normalisation convention $\Delta_K(1) = 1$. Define its \slshape torsion coefficients \upshape $t_i(K)$ for $i\geq 0$ by
\begin{equation*}
t_i(K) = \sum_{j\geq 1} ja_{i+j}.
\end{equation*}

We now want to show that $t_k(K) = \chi(\hAr)-\chi(\hBr)$. This will imply that all torsion coefficients of $K$ are $0$ and thus its Alexander polynomial is trivial.

Define $\Delta_k = C\{j \geq k \mbox{ and } i < 0\}$. Note that $\chi(\Delta_k) = t_k(K)$. We have an exact sequence
\begin{center}
{\setlength\mathsurround{0pt}
\begin{equation*}
\begin{tikzcd}
0 \arrow{r}{} &\Delta_k \arrow{r}{i} &\hA \arrow{r}{\cv} &\hB \arrow{r}{} &0,
\end{tikzcd}
\end{equation*}
}
\end{center}

which leads to an exact triangle

{\setlength\mathsurround{0pt}
\begin{center}
\begin{equation}
\begin{tikzcd}
H_*(\Delta_k) \arrow{r}{i_*}\arrow[leftarrow]{rd}
&\hA \arrow{d}{\hv}\\
&\hB.
\end{tikzcd}
\label{triang-full}
\end{equation}
\end{center}
}

Since $V_0 = 0$, the map $\hvt$ maps $\hAT$ isomorphically onto $\hBT$. So, up to graded isomorphism, we also have an exact triangle

{\setlength\mathsurround{0pt}
\begin{center}
\begin{equation}
\begin{tikzcd}
H_*(\Delta_k) \arrow{r}{i_*}\arrow[leftarrow]{rd}
&\hAr \arrow{d}{\hvti}\\
&\hBr.
\end{tikzcd}
\label{triang-reduced}
\end{equation}
\end{center}
}

It follows that $t_k(K) = \chi(\Delta_k) = \chi(\hAr) - \chi(\hBr) = 0$.
\end{proof}

\section{A bound on $q$ for exceptional knots of genus larger than $1$}
\label{sec:K-spec-genus-1}

In this section we want to show that for knots that do not satisfy the bound of Theorem \ref{theorem-K-special} and have genus larger than $1$, $q$ is still bounded by a quantity depending only on the pair of manifolds connected by surgery. We first prove the following lemma, a version of which for $S^3$ was proven in \cite[Lemma 2.5]{homLidmanZufeltReducible}.

\begin{lemma}
Let $\{V_k\}_{k\in \ZZ}$ and $\{H_k\}_{k\in \ZZ}$ be numbers associated with a knot $K$ in a homology sphere, as defined in Section \ref{sec:mapCone}. Then
$$
H_k-V_k = k
$$
for all $k \in \ZZ$.
\label{lemma-v-h}
\end{lemma}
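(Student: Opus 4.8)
The plan is to extract the relation $H_k-V_k=k$ purely from the intrinsic grading shifts of the two chain maps $\cv$ and $\ch$ that induce $\hvt$ and $\hht$, using that each restricts on the tower to a multiplication by a power of $U$.

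First I would compute the grading shifts of $\cv,\ch\colon \cA\to\cB$ with respect to the $\ZZ$-grading inherited from $C$. The map $\cv$ is just the projection $C\{i\geq 0 \text{ or } j\geq k\}\to C\{i\geq 0\}$, so it preserves the grading and has degree $0$. The map $\ch$ is the composite of the projection to $C\{j\geq k\}$ (degree $0$), multiplication by $U^k$, which shifts the bigrading by $(-k,-k)$ and hence lowers the homological grading by $2k$, and finally the chain homotopy equivalence $C\{j\geq 0\}\cong C\{i\geq 0\}$ (degree $0$). Thus $\ch$ has degree $-2k$. The one point to check here is that the identification $C\{j\geq 0\}\cong C\{i\geq 0\}$ is grading-preserving, which holds because both sides compute $CF^+(Y)$ and the equivalence is induced by Heegaard moves; note this works uniformly for every $k\in\ZZ$ once $U^k$ is read inside the full complex $C$, where $U$ is invertible.

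Next I would pass to homology and restrict to the towers. Since a map induced in homology has the same degree as the chain map inducing it, $\hvt$ has degree $0$ and $\hht$ has degree $-2k$ as maps $\hAT\to\hBT$. On the other hand, both $\hAT$ and $\hBT$ are copies of $\Ta$, and by definition $\hvt=U^{V_k}$ and $\hht=U^{H_k}$. Writing $a$ for the grading of the bottom class of $\hAT$ and $b=d(Y)$ for that of $\hBT$, a $U$-equivariant map acting as $U^c$ sends the class of grading $a+2n$ to the one of grading $b+2(n-c)$, hence has degree $b-a-2c$. Applying this to each map gives $b-a-2V_k=0$ and $b-a-2H_k=-2k$.

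Finally, subtracting these two identities eliminates the offset $b-a$ and yields $2H_k-2V_k=2k$, i.e. $H_k-V_k=k$, as claimed. The main obstacle is really bookkeeping rather than substance: being certain that the contribution of $\ch$ to the grading is exactly $-2k$ and not, say, $+2k$ or shifted by a genus-dependent constant. This comes down to the direction of the $U^k$ translation in the definition of $\ch$ and the fact that the remaining projection and chain homotopy equivalence are grading-preserving; once that is pinned down, the rest is formal.
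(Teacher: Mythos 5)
Your argument is correct in its conclusion but takes a genuinely different route from the paper. The paper does not work at the chain level at all: it invokes the identification of $\hA$ with $HF^+$ of a large integer surgery, under which $\hv$ and $\hh$ become the two-handle cobordism maps in the \Sps\ $\mathfrak{v}_k$ and $\mathfrak{h}_k$, and then extracts $2(H_k-V_k)=\frac{c_1(\mathfrak{v}_k)^2-c_1(\mathfrak{h}_k)^2}{4}=2k$ from the cobordism grading-shift formula. Your approach instead reads the degrees of $\cv$ and $\ch$ directly off their definitions inside $C$, which is more elementary and avoids the surgery exact sequence machinery entirely; the paper's approach, by contrast, needs no information about how the flip map interacts with gradings.

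That flip map is exactly where your write-up is thinnest, and it is the crux rather than mere bookkeeping. The statement ``both sides compute $CF^+(Y)$ and the equivalence is induced by Heegaard moves'' shows that the equivalence preserves the \emph{intrinsic} absolute grading of $CF^+(Y)$, but what you actually need is that the grading $C\{j\geq 0\}$ inherits as a subquotient of $C$ (namely $\mathrm{gr}_w(\boldsymbol{x})+2i$ on $[\boldsymbol{x},i,j]$) agrees with that intrinsic grading under the identification $[\boldsymbol{x},i,j]\mapsto U^{-j}\boldsymbol{x}$. These are a priori different gradings, and their agreement is equivalent to the identity $A(\boldsymbol{x})=\tfrac{1}{2}\bigl(\mathrm{gr}_w(\boldsymbol{x})-\mathrm{gr}_z(\boldsymbol{x})\bigr)$ relating the Alexander grading to the two Maslov gradings; you should cite or prove this. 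Alternatively, you can sidestep it entirely: the flip map is a single fixed homogeneous map used for every $k$, so if it has degree $\delta$ your computation yields $H_k-V_k=k-\delta/2$ for all $k$, and the symmetry $H_k=V_{-k}$ listed in Section \ref{sec:mapCone} gives $H_0=V_0$, forcing $\delta=0$. With either patch the argument is complete.
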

\begin{proof}
According to \cite[Theorem 2.3]{OSzIntegerSurgeries}, the modules $\hA$ can be identified with $HF^+$ of $N$-surgeries on $K$ (in a certain \Sp), where $N$ is a sufficiently large integer. Moreover, after this identification the maps $\hv$ and $\hh$ coincide with the maps into $HF^+(Y)$ induced by the (turned-around) surgery cobordism.

More specifically, the maps $\hv$ and $\hh$ can be thought of as the maps corresponding to the \Sps\ $\mathfrak{v}_k$ and $\mathfrak{h}_k$ respectively, where

$$
\langle c_1(\mathfrak{v}_k),[\widehat{F}] \rangle + N = 2k
$$

and

$$
\langle c_1(\mathfrak{h}_k),[\widehat{F}] \rangle - N = 2k.
$$

Here $[\widehat{F}]$ is the homology class of the surface obtained by capping off a Seifert surface $F$ of $K$ with the core of the 2-handle.

From this we can deduce that $c_1(\mathfrak{v}_k)^2 = -\frac{1}{N}(2k-N)^2$ and $c_1(\mathfrak{h}_k)^2 = -\frac{1}{N}(2k+N)^2$ (see \cite[Proposition 2.69]{manion} for a nice exposition of this calculation). The difference in the grading shifts of the two maps identified with $\hv$ and $\hh$ is given by $2(H_k - V_k)$. On the other hand, we can deduce from \cite[Theorem 7.1]{OSzHoloTriang} that the difference in the grading shifts is also given by
$$
\frac{c_1(\mathfrak{v}_k)^2-c_1(\mathfrak{h}_k)^2}{4} = 2k.
$$

Comparing the two expressions we get the desired result.
\end{proof}

For two homogeneous elements $u$, $v$ in the mapping cone complex, denote their relative $\ZZ$-grading by $\mathrm{deg}(u,v)$. For a homogeneous element $w$ of Heegaard Floer homology of some rational homology sphere, denote by $\mathrm{deg}(w)$ its absolute $\QQ$-grading. Recall that the modules $\hA$ and $\hB$ decompose as the sum of the `tower' $\Ta$ and the reduced part. For a homogeneous element $c$ in either one of $\hA$ or $\hB$ denote by $\widetilde{d}(c)$ its relative grading with the $1$ in the tower part (i.e. $\widetilde{d}(c) = \mathrm{deg}(c,1)$).

As already mentioned, if $Y$ is an $L$-space homology sphere, there is a bound on $q$ similar to that of Theorem \ref{theorem-K-special} that holds for all knots. Thus, as before, we will assume throughout this section that $Y$ is a non-$L$-space homology sphere.

\begin{lemma}
Let $K \subset Y$ be a knot and suppose $Z = Y_{p/q}(K)$, where $p,q>0$. Let $N(Y,Z)$ be defined as in the statement of Theorem \ref{theorem-K-special} and suppose $q>N(Y,Z)$. Then for every homogeneous $z\in \hAr$
$$
\widetilde{d}(z)\geq \min\{\widetilde{d}(c)\ |\ \mbox{homogeneous }c\in \hBr\}.
$$
\label{lemma-d-tilde}
\end{lemma}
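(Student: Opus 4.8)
The plan is to run an element-wise argument with the two maps $\hv$ and $\hh$ after two reductions. First, the conjugation symmetry of $CFK^{\infty}$ underlying the identity $V_k=H_{-k}$ gives a grading-preserving isomorphism $\hA\cong\boldsymbol{A}^+_{-k}(K)$ that interchanges $\hv$ and $\hh$ and preserves $\widetilde{d}$ (it carries tower to tower and reduced part to reduced part), so it suffices to treat $k\geq 0$. Second, since $q>N(Y,Z)$, Theorem \ref{theorem-K-special} gives $V_0(K)=0$; as $\{V_k\}$ is non-increasing and non-negative this forces $V_k=0$ for all $k\geq 0$, and then Lemma \ref{lemma-v-h} yields $H_k=k$ for $k\geq 0$. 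Hence $\hvt$ is multiplication by $U^0=\mathrm{id}$ and $\hht$ is multiplication by $U^{k}$; in particular $\hv$ carries the bottom of the tower of $\hA$ to the bottom of the tower of $\hB$ and preserves $\widetilde{d}$, while $\hh$ lowers $\widetilde{d}$ by exactly $2k$.

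Write $m=\min\{\widetilde{d}(c)\mid c\in\hBr\}$ and let $z\in\hAr$ be homogeneous realising the minimal value of $\widetilde{d}$ on $\hAr$; after multiplying by a power of $U$ I may assume $Uz=0$. If $\hvti(z)\neq 0$, then $\hvti(z)$ is a nonzero element of $\hBr$ with $\widetilde{d}(\hvti(z))=\widetilde{d}(z)$, so $\widetilde{d}(z)\geq m$ and we are done. Likewise, if $\hhti(z)\neq 0$, then $\widetilde{d}(\hhti(z))=\widetilde{d}(z)-2k\geq m$, giving the even stronger bound $\widetilde{d}(z)\geq m+2k$. The base case $k\geq g(K)$, where $\hv$ is an isomorphism, is the special instance in which $\hvti$ is an isomorphism and $\min\widetilde{d}(\hAr)=m$ exactly.

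It remains to bound $\widetilde{d}(z)$ when $z\in\ker(\hvti)\cap\ker(\hhti)$. Here $\hv(z)$ and $\hh(z)$ lie in the tower $\Ta\subset\hB$, and since $Uz=0$ they lie in its socle $\FF\cdot 1$, so each is either $0$ or the tower bottom. I would control this case through the reduced exact triangle \eqref{triang-reduced}, which identifies $\ker(\hvti)$ with the image of the map $i_*\colon H_*(\Delta_k)\to\hAr$ induced by the inclusion $\Delta_k=\ker(\cv)=C\{i<0,\ j\geq k\}\hookrightarrow\cA$. Equivalently, I would argue by downward induction on $k$ using the short exact sequence of complexes $0\to C\{i<0,\ j=k\}\to\cA\to A^+_{k+1}(K)\to 0$, whose quotient map acts as the identity on towers (hence preserves both the tower bottom and $\widetilde{d}$) and is an isomorphism for $k\geq g(K)$, supplying the base of the induction.

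The crux is therefore a single grading estimate: that the reduced homology $H_*(C\{i<0,\ j=k\})$ — equivalently $H_*(\Delta_k)$ — contributes to $\hAr$ only in relative gradings $\geq m$ with respect to the tower. Intuitively this asserts that the knot filtration cannot create reduced homology lying strictly below where $HF_{red}(Y)=\hBr$ already sits, and I expect to establish it by comparing $C\{i<0,\ j=k\}$ with the $j$-graded pieces of $C\{i<0\}$, whose total homology is governed by $HF^+(Y)$, while tracking how the $U$-action aligns their socles with the tower bottom of $\hA$. This estimate, rather than the map-chasing above, is where the real work lies; the equalities $\dim(\hAr_e)=\dim(\hBr_e)$ and $\dim(\hAr_o)=\dim(\hBr_o)$ from Theorem \ref{theorem-K-special} should be used to guarantee that no reduced homology is lost or gained, so that the minimal relative grading cannot drop below $m$.
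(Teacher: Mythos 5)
There is a genuine gap, and it sits exactly where you say the ``real work lies.'' Your map-chasing disposes of the easy cases ($\hvti(z)\neq 0$ or $\hhti(z)\neq 0$), but the only case that actually needs an argument is $z\in\ker(\hvti)\cap\ker(\hhti)$, and for that case you offer no proof --- only an unestablished ``grading estimate'' on $H_*(\Delta_k)$ whose content is essentially a restatement of the lemma restricted to $\ker(\hvti)$, i.e.\ the argument is circular as it stands. The dimension equalities $\dim(\hAr_e)=\dim(\hBr_e)$ and $\dim(\hAr_o)=\dim(\hBr_o)$ cannot rescue this: equality of dimensions says nothing about where the generators sit relative to the tower, so it cannot prevent the minimal value of $\widetilde{d}$ on $\hAr$ from dropping below $m$.

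The idea you are missing is that the hypothesis $q>N(Y,Z)$ is used a second time, directly, via a counting argument --- not only to invoke Theorem \ref{theorem-K-special}. Suppose for contradiction that some homogeneous $z\in\hAr$ has $\widetilde{d}(z)<m$. Since $\hv$ and $\hh$ do not increase $\widetilde{d}$ (they shift it down by $2V_k$ and $2H_k$ respectively), $z$ must lie in $\ker(\hvti)\cap\ker(\hhti)$, hence in $\ker(\hDti)$. The key point is multiplicity: the index $k$ occurs as $\floor{\frac{i+pn}{q}}$ for exactly $q$ pairs $(i,n)$ with $0\leq i<p$, so this single element $z$ produces $q$ linearly independent elements of $\bigoplus_i\ker(\boldsymbol{\widetilde{D}}^+_{i,p/q})$ across the mapping cones for all \Sps. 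Lemma \ref{lemma-ker-coker-ineq} bounds each $\dim(\ker(\boldsymbol{\widetilde{D}}^+_{i,p/q}))$ by $\dim(HF_{red}(Z,i))$, so summing gives $q\leq\dim(HF_{red}(Z))\leq N(Y,Z)<q$, a contradiction. This replaces your entire inductive/filtration scheme (the conjugation reduction, the short exact sequences $0\to C\{i<0,\ j=k\}\to\cA\to A^+_{k+1}(K)\to 0$, and the comparison with the $j$-graded pieces of $C\{i<0\}$), none of which is needed.
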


\begin{proof}
Suppose there exists $k$ and $z\in \hAr$ with 

$$
\widetilde{d}(z)< \min\{\widetilde{d}(c)\ |\ \mbox{homogeneous }c\in \hBr\}.
$$

Both $\hv$ and $\hh$ do not increase $\widetilde{d}$, so $z$ is in the kernel of both $\hvti$ and $\hhti$, hence also in the kernel of $\hDti$. This holds for every copy of $\hAr$ in the mapping cone complexes for all \Sps, so summing contributions from all \Sps\ and using Lemma \ref{lemma-ker-coker-ineq} we deduce
$$
q\leq \dim(HF_{red}(Z))<N(Y,Z).
$$
This is a contradiction.
\end{proof}

Recall that for a rational homology sphere $Z$ we defined

$$
\widehat{D}(Z) = \max\{\mathrm{deg}(z)-d(Z,\mathfrak{t})\ |\ \text{homogeneous } z\in HF_{red}(Z,\mathfrak{t}),\mathfrak{t}\in \mathrm{Spin}^c(Z)\}
$$

and

$$
\widecheck{D}(Z) = \min\{\mathrm{deg}(z)-d(Z,\mathfrak{t})\ |\ \text{homogeneous } z\in HF_{red}(Z,\mathfrak{t}),\mathfrak{t}\in \mathrm{Spin}^c(Z)\},
$$

where $\mathrm{deg}(z)$ is the absolute grading of $z$.

We can now formulate a bound on $q$ that holds for knots that have genus $>1$ and are not covered by Theorem \ref{theorem-K-special}.

\begin{prop}
Let $K\subset Y$ be a knot such that $Z = Y_{p/q}(K)$ for $p,q>0$ and $q>N(Y,Z)$. Suppose the genus of $K$ is larger than one. Then
$$
\floor{q/p} \leq \frac{\widehat{D}(Z)-\widecheck{D}(Y)}{2}.
$$
\label{prop-excep-K-genus}
\end{prop}

\begin{proof}
By Theorem \ref{theorem-genus_detect} and the exact triangle \eqref{triang-full} the map $\boldsymbol{v}^+_{g-1}$ must not be an isomorphism (where $g$ is the genus of $K$). Since $V_0 = 0$, the map $\boldsymbol{v}^T_{g-1}$ is an isomorphism, so the map $\boldsymbol{\widetilde{v}}_{g-1}$ must not be an isomorphism. Since the spaces $\boldsymbol{A}^{red}_{g-1}(K)$ and $\boldsymbol{B}^{red}$ have the same dimension the map $\boldsymbol{\widetilde{v}}_{g-1}$ must have some kernel. Suppose $z \in \ker(\boldsymbol{\widetilde{v}}_{g-1})$. By adding an element of $\boldsymbol{A}^T_{g-1}(K)$ if necessary, we can assume that $z\in \ker(\boldsymbol{v}^+_{g-1})$.

Let $N = \max\{n\ |\ \floor{\frac{pn}{q}} = g-1\}$. Then $(N,z) \in (N, \boldsymbol{A}^+_{g-1}(K))$ is in the kernel of $\boldsymbol{v}^+_{g-1}$. By Lemma \ref{lemma-iso-at-ends} we can assume that $(N,z)$ is in the kernel of $\boldsymbol{D}^+_{0,p/q}$. Denoting as usual by $1$ the generators of the kernel of $U$ in the tower modules, we have
$$
\mathrm{deg}((N,1),(0,1)) = 2\sum_{i = 1}^{g-1}n_iH_i,
$$
where $n_i \geq \floor{q/p}$. Since $V_0 = 0$, by Lemma \ref{lemma-v-h} we have $H_i = i$ for $i\geq 0$. So
$$
\mathrm{deg}((N,1),(0,1)) \geq g(g-1)\floor{q/p}.
$$

By the proof of Proposition \ref{prop-d-ineq} $\mathrm{gr}(0,1)\geq d(Z,0)$. In homology $(N,z)$ represents some element of $HF_{red}(Z,0)$. Suppose its absolute grading there is $G$.

Then $G - d(Z,0) \geq \mathrm{deg}((N,z), (0,1)) = \widetilde{d}(z) + \mathrm{deg}((N,1),(0,1))$. By Lemma \ref{lemma-d-tilde}
$$
\widetilde{d}(z) \geq \widecheck{D}(Y).
$$

Combining the various inequalities we obtain
$$
\floor{q/p}\leq \frac{G-d(Z,0)-\widecheck{D}(Y)}{g(g-1)} \leq \frac{\widehat{D}(Z)-\widecheck{D}(Y)}{2}.
$$
\end{proof}

We can combine Theorem \ref{theorem-K-special} and Proposition \ref{prop-excep-K-genus} into the following

\begin{cor}
Let $Y$ be a homology sphere and $K\subset Y$ a knot. Suppose $Y_{p/q}(K)=Z$ for $p\neq 0$. There exists a constant $C(Y,Z)$ that depends only on the Heegaard Floer homology of $Y$ and $Z$ such that if $q>C(Y,Z)$ then
\begin{itemize}
\item the genus of $K$ is $1$;
\item $K$ has trivial Alexander polynomial;
\item $V_0(K) = 0$;
\item $\dim(\hAr_e) = \dim(HF_{red}(Y)_{e})$ for all $k$;
\item $\dim(\hAr_o) = \dim(HF_{red}(Y)_{o})$ for all $k$.
\end{itemize}
\end{cor}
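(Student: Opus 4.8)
The plan is to obtain the corollary by combining Theorem~\ref{theorem-K-special} with Proposition~\ref{prop-excep-K-genus}; the only genuinely new ingredient is converting the bound on $\floor{q/p}$ coming from the Proposition into a bound on $q$ itself. First I would arrange that $p,q>0$. The hypothesis $q>C(Y,Z)$ already gives $q>0$, and if $p<0$ I would replace the triple $(Y,Z,K)$ by $(-Y,-Z,\overline{K})$, where $\overline{K}$ is the mirror knot: this turns $Y_{p/q}(K)=Z$ into a surgery with positive first parameter, leaves $g(K)$ and $\Delta_K$ unchanged, and alters the Heegaard Floer data only through orientation reversal. Since the dimensions of the reduced groups are orientation-independent, it suffices to prove the statement for $p,q>0$ and to let $C(Y,Z)$ be the larger of the constants produced by the two orientations.

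With $p,q>0$ and $q>N(Y,Z)$, Theorem~\ref{theorem-K-special} at once yields four of the five conclusions: $V_0(K)=0$, $\Delta_K\equiv 1$, and the two equalities $\dim(\hAr_{e})=\dim(HF_{red}(Y)_{e})$ and $\dim(\hAr_{o})=\dim(HF_{red}(Y)_{o})$ for all $k$. To control the genus I would use the homological identity $|H_1(Z)|=p$, so that $p$ is determined by $Z$ alone. If $g(K)>1$, Proposition~\ref{prop-excep-K-genus} gives $\floor{q/p}\le\frac{D(Z)-D(Y)}{2}$, and hence
$$
q< p\left(\frac{D(Z)-D(Y)}{2}+1\right)=|H_1(Z)|\left(\frac{D(Z)-D(Y)}{2}+1\right).
$$
Therefore, setting
$$
C(Y,Z)=\max\left\{N(Y,Z),\ |H_1(Z)|\left(\frac{D(Z)-D(Y)}{2}+1\right)\right\}
$$
(and taking the maximum of this quantity over both orientations, as above), the assumption $q>C(Y,Z)$ makes $g(K)>1$ impossible, so $g(K)\le 1$. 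Every term in $C(Y,Z)$ depends only on $|H_1(Z)|$ and the Heegaard Floer homology of $Y$ and $Z$, and $|H_1(Z)|$ is itself determined by $Z$, as required.

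It remains to upgrade $g(K)\le 1$ to $g(K)=1$. By genus detection (Theorem~\ref{theorem-genus_detect}) a knot of genus $0$ has $\widehat{HFK}$ supported in Alexander grading $0$, so it bounds a disc and is the unknot, whose exterior is a solid torus. Discarding this trivial case, as is standard for the knot-complement problem, a nontrivial knot satisfies $g(K)\ge 1$, and together with $g(K)\le 1$ this forces $g(K)=1$. I expect the main obstacle to be precisely the bookkeeping in the middle paragraph: one must check carefully that the $\floor{q/p}$-bound of Proposition~\ref{prop-excep-K-genus} really does descend to a bound on $q$ expressible through $Y$ and $Z$ only --- which is exactly where the identity $p=|H_1(Z)|$ is used, since $\floor{q/p}\le M$ forces $q<p(M+1)$ for any real $M$ --- and must handle both the sign of $p$ and the degenerate genus-$0$ endpoint without circularity.
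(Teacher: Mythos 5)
Your proposal is correct and follows exactly the route the paper intends: the paper offers no separate argument for this corollary beyond the remark that it is obtained by combining Theorem \ref{theorem-K-special} with Proposition \ref{prop-excep-K-genus}, and your write-up is precisely that combination. The details you supply --- reducing to $p,q>0$ by reversing orientation and taking the larger of the two constants, using $p=|H_1(Z)|$ to convert the bound on $\floor{q/p}$ into a bound on $q$, and noting that the genus-$0$ (unknot) case must be set aside for the conclusion $g(K)=1$ to hold as literally stated --- are exactly the bookkeeping the paper leaves implicit.
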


\section{Some knots determined by their complements}

Results of Section \ref{sec:Z-special} can be applied to show that in certain homology $\RR P^3$'s non-null-homologous knots are determined by their oriented complements.

\begin{cor}
Let $Z$ be a closed connected oriented manifold with $|H_1(Z)| = 2$. Suppose that $\dim(HF_{red}(Z))$ is odd. Then non-null-homologous knots in $Z$ are determined by their complements.
\label{cor-z-special-complement}
\end{cor}

\begin{proof}
By Theorem \ref{theorem-Z-special}, if a knot in one of these spaces has a homology sphere surgery, then it is determined by its complement. Thus it will be enough to show that non-null-homologous knots in such spaces have homology sphere surgeries.

Let $Z$ be a space as in the statement. Denote by $S$ a solid torus regular neighbourhood of $K$ and denote the exterior of $K$ by $Z_0$. Let $T$ be the boundary of $S$. Consider the exact sequence for the pair $(Z, S)$.

{\setlength\mathsurround{0pt}
\begin{center}
\begin{tikzcd}
0 \arrow{r}
&H_2(Z,S) \arrow{r}
&H_1(S) \arrow{r}
&H_1(Z) \arrow{r}
&H_1(Z,S) \arrow{r}
&0
\end{tikzcd}
\end{center}
}
By considering this sequence with coefficients in $\QQ$, $H_2(Z,S)$ is a direct sum of one copy of $\ZZ$ with a torsion group. Then excision and Poincar\'e--Lefschetz duality show that $H_2(Z,S) \cong H_2(Z_0,T) \cong H^1(Z_0)$. The latter group is free abelian, thus $H_2(Z_0,T) \cong \ZZ$. Similarly $H_1(Z,S)\cong H^2(Z_0)$, which is equal (by the Universal Coefficients Theorem) to the torsion part of $H_1(Z_0)$, call it $G$. We have $H_1(Z_0) \cong \ZZ \oplus G$. Now the sequence above becomes

{\setlength\mathsurround{0pt}
\begin{center}
\begin{tikzcd}
0 \arrow{r}
&\ZZ \arrow{r}
&\ZZ \arrow{r}
&\ZZ_2 \arrow{r}
&G \arrow{r}
&0.
\end{tikzcd}
\end{center}
}

Since $K$ is not null-homologous (but obviously $2K$ is), there is a rational Seifert surface for $K$ that winds twice longitudinally. This implies that the map between two copies of $\ZZ$ in the sequence above is multiplication by $2$. It follows that $G = 0$.

Now consider the exact sequence of the pair $(T,Z_0)$:

{\setlength\mathsurround{0pt}
\begin{center}
\begin{tikzcd}
\ \arrow{r}
&H_2(Z_0,T) \arrow{r}
&H_1(T) \arrow{r}
&H_1(Z_0) \arrow{r}
&0.
\end{tikzcd}
\end{center}
}

This sequence shows that $H_1(Z_0)$ is generated by the images of the meridian and the longitude. This means that if we perform a surgery with a slope given by the generator of $H_1(Z_0)$, we get a homology sphere.
\end{proof}

The Brieskorn sphere $\Sigma(2,3,7)$ has perhaps the simplest Heegaard Floer homology a non-$L$-space can possibly have -- the rank of its reduced Floer homology is $1$. This makes it possible to show that `most' knots in $\Sigma(2,3,7)$ are determined by their complements. The first part of the proof shows that a surgery from $\Sigma(2,3,7)$ to $\Sigma(2,3,7)$ must be integral. The thinking behind the proof is similar to that of Section \ref{sec:K-special}, but we get a better bound due to the fact that $1<2$ and linear maps into 1-dimensional spaces are either trivial or surjective.

\begin{theorem}
Knots of genus larger than $1$ in the Brieskorn sphere $\Sigma(2,3,7)$ are determined by their complements. Moreover, if $K\subset \Sigma(2,3,7)$ is a counterexample to Conjecture \ref{conj-complement} then the surgery slope is integral, $\widehat{HFK}(\Sigma(2,3,7),K,1)$ has dimension $2$ and its generators lie in different $\ZZ_2$-gradings.

Non-fibred knots of genus larger than $1$ in $\Sigma(2,3,7)$ are strongly determined by their complements.
\label{theorem-brieskorn}
\end{theorem}

\begin{proof}
The Heegaard Floer homology of $-\Sigma(2,3,7)$ has been computed in \cite[Section 8.1]{OSzAbsGr}. Alternatively, we can calculate it using the program HFNem2 by \c{C}a\u{g}r{\i} Karakurt\footnote{At the time of writing available for download at \url{https://www.ma.utexas.edu/users/karakurt/}}. It is clear that proving what we want for knots in $-\Sigma(2,3,7)$ is equivalent to proving it for knots in $\Sigma(2,3,7)$. 

The calculation shows that $-\Sigma(2,3,7)$ has reduced Floer homology of rank $1$, situated in the absolute grading $0$ which is equal to the $d$-invariant of $-\Sigma(2,3,7)$. By \cite[Proposition 2.5]{OSzPropApp} and \cite[Proposition 4.2]{OSzAbsGr} reduced Floer homology of $\Sigma(2,3,7)$ has rank one (and odd absolute $\ZZ_2$-grading) and $d$-invariant $0$.

Suppose a knot $K\subset \Sigma(2,3,7)$ gives $\Sigma(2,3,7)$ by $1/q$-surgery. Since the analogous statement can be proven for $-\Sigma(2,3,7)$, we can assume that $q>0$.

Denote $Y = \Sigma(2,3,7)$. Suppose for a contradiction that $q>1$.  Note that by Proposition \ref{prop-d-ineq} we must have $V_0 = 0$. Consider $(qg-1,\boldsymbol{A}^+_{g-1})$, where $g = g(K)$. This is the `rightmost' group for which the corresponding map $\boldsymbol{v}$ is not an isomorphism.

Suppose $\boldsymbol{A}^{red}_{g-1} = 0$. Since $q\geq 2$, $(qg-1,\boldsymbol{B}^{red})$ is not in the image of $\hD$. Thus it gives rise to a generator of $HF_{red}(Y)$. However, this element is in the even $\ZZ_2$-grading. This gives a contradiction.

Now assume $\dim(\boldsymbol{A}^{red}_{g-1}) \geq 1$. Since the map $\boldsymbol{v}_{g-1}$ cannot be an isomorphism, the map $\boldsymbol{v}^{red}_{g-1}$ must have kernel. According to Lemma \ref{lemma-iso-at-ends} this element of the kernel gives rise to an element in $HF_{red}(Y)$. However, applying the same argument `on the other end', i.e. to $\boldsymbol{h}_{-(g-1)}$, we see that we must have $\dim(HF_{red}(Y))\geq 2$, which gives a contradiction.

All in all, we have $q=1$, irrespective of the genus. Now assume $g>1$. According to \cite[Theorem 9.1]{OSzPropApp}, there is an exact triangle of $\FR$-modules

{\setlength\mathsurround{0pt}
\begin{center}
\begin{equation}
\begin{tikzcd}
HF^+(Y) \arrow{r}{f}\arrow[leftarrow]{rd}{h}
&HF^+(Y_0(K)) \arrow{d}{g}\\
&HF^+(Y_1(K)).
\label{fig:extriang-surgeries}
\end{tikzcd}
\end{equation}
\end{center}
}

Much as in the proof of \cite[Corollary 4.5]{OSzKnotInv} we have\footnote{The labellings of the \Sps\ are obtained by taking the half of the pairing of their Chern classes with the generator of $H^2(Y_0(K))$ obtained by capping a Seifert surface.} 
$$
HF^+(Y_0(K),g-1) \cong HF^+(Y_0(K),-(g-1)) \cong \widehat{HFK}(Y,K,g),
$$

so these groups are non-trivial by Theorem \ref{theorem-genus_detect}.

By \cite[Theorem 10.4]{OSzPropApp} $HF^+(Y_0(K),0)\cong \mathcal{T}_{d_{-1/2}(Y_0(K))}\oplus \mathcal{T}_{d_{+1/2}(Y_0(K))}\oplus R$, where $R = HF_{red}(Y_0(K),0)$ is a finite-dimensional vector space in the kernel of some power of $U$ and $d_{\pm 1/2}(Y_0(K)) \in \ZZ + \frac{1}{2}$ is an analogue of the correction term.

By \cite[Lemma 3.1]{OSzAbsGr} the component of $f$ mapping into $HF^+(Y_0(K),0)$ has grading $-1/2$ and the restriction of $g$ to $HF^+(Y_0(K),0)$ also has grading $-1/2$.

Since in high enough gradings the group $HF^+(Y_0(K), 0)$ consists of two tower modules (in different relative gradings) and only one of them can be in the image of the map $f$, it follows that the other one has to map onto the tower in $HF^+(Y_1(K))$. It follows that the restriction of $h$ to the tower part of $HF^+(Y_1(K))$ is zero. Since we assume $g>1$ and so $HF^+(Y_0(K))$ contains two non-trivial vector spaces $HF^+(Y_0(K),\pm(g-1))$, $h$ has to be trivial. If $h$ was not trivial, it would surject onto the reduced part of $HF^+(Y)$ and so there would be nothing left to map onto $HF^+(Y_0(K),\pm(g-1))$. Thus we can assume $h = 0$. Then the maps $f$ and $g$ map a tower module isomorphically onto another one, so comparing the gradings we see that $d_{\pm 1/2}(Y_0(K)) = \pm 1/2$.

Moreover, the dimension of $HF_{red}(Y_0(K))$ has to be 2, thus
$$
HF^+(Y_0(K),g-1) \cong HF^+(Y_0(K),-(g-1)) \cong \widehat{HFK}(Y,K,g) \cong \FF_2,
$$

$$
HF^+(Y_0(K),k)  = 0
$$

for $k \not \in \{0,\pm(g-1)\}$ and $R = 0$.

By \cite[Proposition 10.14 and Theorem 10.17]{OSzPropApp} we have
$$
t_k(K) = 0
$$

for $0\leq k <g$ and $t_{g-1}(K) = \pm 1$.

Notice that
$$
\Delta_K''(1) = 2t_0(K) + 4\sum_{i=1}^{g-1}t_i(K) = \pm 4 \neq 0,
$$

which contradicts equation \eqref{lambda-surg}.

If $g = q = 1$, then by the reasoning of Lemma \ref{lemma-iso-at-ends} $\boldsymbol{A}^{red}_0\cong HF_{red}(Y) = \FF_2$. Since $\boldsymbol{v}^{red}_0$ cannot be an isomorphism, it must be zero. It follows that the dimension of $\widehat{HFK}(Y,K,1)$ is $2$. Since $\Delta_K''(1) = 0$ forces the Alexander polynomial to be trivial, the two generators have to be in different $\ZZ_2$-gradings.

Now suppose $K\subset Y$ produces $-Y$ by $1/q$-surgery for $q>0$. Moreover, let $g(K)>1$. Just as before we cannot have $\dim(\boldsymbol{A}^{red}_{g-1}) \geq 1$, thus $\boldsymbol{A}^{red}_{g-1} = 0$. Since the $d$-invariants of $Y$ and $-Y$ coincide, we have $V_0 = 0$, thus the map $\boldsymbol{v}^{red}_{g-1}$ has no kernel and has co-kernel of dimension $1$. It follows that $\dim(\widehat{HFK}(Y,K,g(K))) = 1$, so by \cite[Theorem 1.1]{niDetectsFibred} or \cite[Theorem 9.11]{juhaszSurfaceDecompositions} $K$ is fibred.
\end{proof}

\section{Null-homologous knots in $L$-spaces}

The aim of this section is to show that null-homologous knots in $L$-spaces are determined by their complements. Moreover, we will also show that all knots in lens spaces of the form $L(p,q)$ with $p$ square-free are determined by their complements.

First of all, we need to verify that the mapping cone formula for rational surgeries as proved in \cite{OSzRatSurg} for knots in integer homology spheres also applies to null-homologous knots in rational homology spheres with only minor modifications.

Let $Y$ be a rational homology sphere and $K$ a null-homologous knot in it. Let $Y_0$ be the exterior of $K$. To understand relative \Sps\ we first need to calculate the first homology.

\begin{lemma}
We have $H_1(Y_0) \cong \ZZ \oplus H_1(Y)$, where the first factor is the group generated by the meridian of $K$.
\label{lemma-long-exact}
\end{lemma}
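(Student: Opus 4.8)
The plan is to read off $H_1(Y_0)$ from the long exact sequence of the pair $(Y,Y_0)$, and then to use the null-homologous hypothesis to split off the meridian as a free summand. This is essentially the same strategy as in the proof of Corollary \ref{cor-z-special-complement}, but here the null-homologous condition makes the outcome cleaner.

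First I would write $Y = Y_0 \cup_T N$, where $N = \mathrm{nb}(K) \cong S^1 \times D^2$ is a tubular neighbourhood of $K$ and $T = \partial N$ is the boundary torus. By excision, $H_*(Y, Y_0) \cong H_*(N, T)$. Since $N$ deformation retracts onto $K$, Poincar\'e--Lefschetz duality (or a direct run through the long exact sequence of $(N,T)$) gives $H_2(N,T) \cong \ZZ$ and $H_1(N,T) = 0$. Moreover $H_2(N,T)$ is generated by a meridian disk $D$ of the solid torus $N$, so its image under the connecting map $\partial\colon H_2(Y,Y_0) \to H_1(Y_0)$ is the class of $\partial D = \mu$, the meridian of $K$. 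Feeding these computations into the long exact sequence of $(Y,Y_0)$ and using that $Y$ is a rational homology sphere (so $H_2(Y) = 0$ and $H_1(Y)$ is finite), I obtain the short exact sequence
$$
0 \to \ZZ\langle\mu\rangle \to H_1(Y_0) \to H_1(Y) \to 0,
$$
in which the $\ZZ$-summand is generated by the meridian.

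It remains to split this sequence, and this is the step where the null-homologous hypothesis is essential. Since $K$ bounds a Seifert surface $F$ in $Y$, the surface $F_0 = F \cap Y_0$ is a properly embedded surface in $Y_0$ whose boundary is a longitude of $K$; it represents a class in $H_2(Y_0, T)$. Via Poincar\'e--Lefschetz duality $H_2(Y_0, T) \cong H^1(Y_0) = \mathrm{Hom}(H_1(Y_0), \ZZ)$ (the torsion of $H_1(Y_0)$ contributes nothing to $\mathrm{Hom}(-,\ZZ)$), so $F_0$ determines a homomorphism $\phi\colon H_1(Y_0) \to \ZZ$ given by algebraic intersection with $F_0$. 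Because $\mu$ meets $F_0$ transversally in a single point, $\phi(\mu) = \pm 1$, so $\phi$ retracts $H_1(Y_0)$ onto the $\ZZ\langle\mu\rangle$ summand. Hence the sequence splits and $H_1(Y_0) \cong \ZZ\langle\mu\rangle \oplus H_1(Y)$, as claimed.

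I expect the splitting to be the only real content: the homological bookkeeping of the first two paragraphs is routine, but the retraction $\phi$ genuinely requires the Seifert surface, and therefore the null-homologous hypothesis. Without it the extension need not split in this way — as the lens space examples elsewhere in the paper show — which is exactly why this lemma is stated for null-homologous knots.
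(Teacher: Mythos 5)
Your proof is correct, but it takes a genuinely different route from the paper's. The paper works with the long exact sequence of the pair $(Y,S)$, where $S$ is the solid-torus neighbourhood: it uses the Seifert surface to show the connecting map $H_2(Y,S)\to H_1(S)$ is an isomorphism, deduces $H_1(Y,S)\cong H_1(Y)$, identifies the torsion of $H_1(Y_0)$ with $H_1(Y)$ via Poincar\'e duality and universal coefficients, computes that the rank is one with a Mayer--Vietoris argument, and only then extracts the meridian as a generator of the free summand by chasing through the Mayer--Vietoris sequence. You instead take the pair $(Y,Y_0)$, excise to $(N,T)$ to obtain the short exact sequence $0\to\ZZ\langle\mu\rangle\to H_1(Y_0)\to H_1(Y)\to 0$ directly (and, as you observe, this step needs nothing about $K$ beyond $Y$ being a rational homology sphere), and then split it explicitly by the homomorphism $\phi$ dual to the Seifert surface, using $\phi(\mu)=\pm 1$. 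Your splitting map accomplishes in one stroke what the paper does by separately computing rank and torsion and then identifying the generator, and it isolates precisely where the null-homologous hypothesis is used. Both arguments are sound; yours is arguably the more transparent, while the paper's has the mild advantage of producing along the way the auxiliary fact that the longitude dies in $H_1(Y_0)$, which is implicit rather than explicit in your version.
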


\begin{proof}
Let $S$ be a (closed) regular neighbourhood of $K$ and $T = \partial Y_0$ its boundary. Consider the long exact sequence for the pair

{\setlength\mathsurround{0pt}
\begin{center}
\begin{equation*}
\begin{tikzcd}
0 \arrow{r}{} &H_2(Y,S) \arrow{r}{} &H_1(S) \arrow{r}{} &H_1(Y) \arrow{r} &H_1(Y,S) \arrow{r} &0.
\end{tikzcd}
\end{equation*}
\end{center}
}

By Excision, $H_*(Y,S) \cong H_*(Y_0,T)$. Since $K$ is null-homologous, the boundary of its Seifert surface generates $H_1(S)$. Thus the map $H_2(Y,S) \to H_1(S)$ is an isomorphism.

It follows that $H_1(Y) \cong H_1(Y_0,T)$. By Poincar\'e--Lefschetz duality $H_1(Y_0,T) \cong H^2(Y_0)$ and by the Universal Coefficients Theorem the torsion part of $H_1(Y_0)$ is equal to the torsion part of $H^2(Y_0)$, which, by the previous sentence, equals $H_1(Y)$.

Now consider the Mayer-Vietoris sequence

{\setlength\mathsurround{0pt}
\begin{center}
\begin{equation*}
\begin{tikzcd}
0 \arrow{r} &H_1(T) \arrow{r} &H_1(S)\oplus H_1(Y_0) \arrow{r} & H_1(Y) \arrow{r} &0.
\end{tikzcd}
\end{equation*}
\end{center}
}

The same sequence with rational coefficients shows that $H_1(Y_0)$ has rank one, so combining with the argument above, $H_1(Y_0) \cong \ZZ \oplus H_1(Y)$. Moreover, the map $H_1(T) \to H_1(S)$ takes the longitude to a generator and the meridian to $0$ and also the longitude is trivial in $H_1(Y_0)$. It follows that we have an exact sequence

{\setlength\mathsurround{0pt}
\begin{center}
\begin{equation*}
\begin{tikzcd}
0 \arrow{r} &\langle m \rangle=\ZZ \arrow{r}{f} &H_1(Y_0)\cong \ZZ\oplus H_1(Y) \arrow{r} & H_1(Y) \arrow{r} &0.
\end{tikzcd}
\end{equation*}
\end{center}
}

Since the map $f$ is an injection, the subgroup $0 \oplus H_1(Y)$ is disjoint from the image of $f$. Hence it is mapped injectively into $H_1(Y)$. However, since this group is finite, the restriction of the second map from the right to $H_1(Y)$ is an isomorphism. It follows that the $\ZZ$ factor of $H_1(Y_0)$ (perhaps after changing the splitting) is equal to the kernel of this map and thus is generated by the meridian.
\end{proof}

Since $H_1(Y_0)$ acts freely and transitively on the set of relative \Sps\ on  $Y_0$, we can identify (non-canonically) this group with the set of relative \Sps. In other words, we label relative \Sps\ on $Y_0$ by a pair $(n,h)$ with $n\in \ZZ$ and $h\in H_1(Y)$. Moreover, by adding a multiple of the meridian we can ensure that
$$
\langle c_1((n,h)),[F] \rangle = 2n,
$$

where $F$ is a Seifert surface for $K$.

Recall from \cite[Section 7]{OSzRatSurg} that doing $p/q$ surgery on $K$ in $Y$ is equivalent to doing integral surgery with slope $a = \floor{p/q}$ on the knot $\widehat{K} = K\#O_{q/r}$ in $\widehat{Y} = Y\#(-L(q,r))$. Here $p = aq + r$ and $O_{q/r}$ is a core of one of the Heegaard solid tori in $-L(q,r)$, thought of as the image of one component of the Hopf link after the $-q/r$-surgery on the other component.

Denote by $\widehat{Y}_0$ and $L_0$ the exteriors of $\widehat{K}_0$ and $O_{q/r}$ respectively. Notice that $\widehat{Y}_0$ is obtained by glueing $Y_0$ and $L_0$ along an annulus $A$ whose core maps to meridians of $K$ and $O_{q/r}$. Consider the associated Mayer-Vietoris sequence

{\setlength\mathsurround{0pt}
\begin{center}
\begin{equation*}
\begin{tikzcd}
\arrow{r} &H_1(A) \arrow{r} &H_1(Y_0)\oplus H_1(L_0) \arrow{r} &H_1(\widehat{Y}_0) \arrow{r} &0.
\end{tikzcd}
\end{equation*}
\end{center}
}

We can rewrite it as

{\setlength\mathsurround{0pt}
\begin{center}
\begin{equation*}
\begin{tikzcd}
\arrow{r} &\ZZ \arrow{r} &\ZZ \oplus H_1(Y)\oplus \ZZ \arrow{r} &H_1(\widehat{Y}_0) \arrow{r} &0.
\end{tikzcd}
\end{equation*}
\end{center}
}

Moreover, the generator of $H_1(A)$ is mapped to meridians of both knots. Thus the second map from the right is given by $1 \mapsto (1,0,-q)$. It follows that we can write $H_1(\widehat{Y}_0) \cong \ZZ \oplus H_1(Y)$. Moreover, the meridian of $K$ maps to $q$ times the generator of $\ZZ$ and the meridian of $O_{q/r}$ maps to the generator of $\ZZ$. As in the case of homology spheres, the push-off of $K$ with respect to the framing $a$ is mapped to $p$ times the generator of the $\ZZ$ summand.

Now just as in \cite[Proof of Theorem 1.1]{OSzRatSurg} we can assemble the mapping cone, whose homology will coincide with the Heegaard Floer homology of $p/q$-surgery on $K$. The only difference with the case of homology spheres is that instead of indices $i$ (that represented relative \Sps) we have to use pairs $(i,h)$, but when considering every \Sp\ on the resulting space separately, $h$ stays the same.

We use the same notation as in \cite{OSzRatSurg}, replacing relative \Sps\ with the labelling we defined after the proof of Lemma \ref{lemma-long-exact}, i.e. $(n,h)$. Note that $B^+_{(n,h)}$ only depends on $h$ (up to a shift in the filtration) and in homology gives $HF^+(Y,h)$.\footnote{The statement should be read as saying that there is an identification between $H_1(Y)$ and $\mathrm{Spin}^c(Y)$ such that this correspondence holds.} Consequently we denote this group simply by $B^+_h$.

For each $h \in H_1(Y)$ and $0\leq i <p$ consider the set of groups $(s,A^+_{(\floor{(i+ps)/q},h)})$ for $s \in \ZZ$. Combine them into

$$
\mathcal{A}^+_{(i,h)} = \bigoplus_{s\in \ZZ} (s, A^+_{(\floor{(i+ps)/q},h)}).
$$

Similarly, define

$$
\mathcal{B}_{(i,h)} = \bigoplus_{s\in \ZZ} (s,B^+_h).
$$

Define $D^+_{(i,h),p/q}\co\mathcal{A}^+_{(i,h)} \to \mathcal{B}_{(i,h)}$ component-wise by

$$
D^+_{(i,h),p/q}(s,a_s) = (s,v^+_{(\floor{(i+ps)/q},h)}(a_s))+(s+1,h^+_{(\floor{(i+ps)/q},h)}(a_s)).
$$

Denote by $\mathbb{X}^+_{(i,h),p/q}$ the mapping cone of $D^+_{(i,h),p/q}$.Then the Heegaard Floer homology of $Y_{p/q}(K)$ in a certain \Sp\ is given by the homology of $\mathbb{X}^+_{(i,h),p/q}$. We denote this \Sp\ on $Y_{p/q}(K)$ by $(i,h)$. In other words, we have

$$
H_*(\mathbb{X}^+_{(i,h),p/q}) \cong HF^+(Y_{p/q}(K),(i,h)).
$$

Reusing the notation from above, let $\boldsymbol{A}^+_{(n,h)}$, $\boldsymbol{B}^+_h$, $\mathbb{A}^+_{(i,h)}$ and $\mathbb{B}_{(i,h)}$ be the homologies of $A^+_{(n,h)}$, $B^+_h$, $\mathcal{A}^+_{(i,h)}$ and $\mathcal{B}_{(i,h)}$ respectively. Let $\boldsymbol{v}^+_{(n,h)}$, $\boldsymbol{h}^+_{(n,h)}$ and $\boldsymbol{D}^+_{(i,h),p/q}$ be the maps induced by $v^+_{(n,h)}$, $h^+_{(n,h)}$ and $D^+_{(i,h),p/q}$ respectively in homology.

As before, we denote by $\boldsymbol{D}^T_{(i,h),p/q}$, $\boldsymbol{v}^T_{(n,h)}$ and $\boldsymbol{h}^T_{(n,h)}$ the restrictions of $\boldsymbol{D}^+_{(i,h),p/q}$, $\boldsymbol{v}^+_{(n,h)}$ and $\boldsymbol{h}^+_{(n,h)}$ to the tower parts (in the case of $\boldsymbol{D}^T_{(i,h),p/q}$ the restriction to the sum of the tower parts).

The maps $\boldsymbol{v}^T_{(n,h)}$ and $\boldsymbol{h}^T_{(n,h)}$ are multiplications by powers of $U$. Denote these powers by $V_{(n,h)}$ and $H_{(n,h)}$ respectively. For each $h \in H_1(Y)$ we have:

\begin{itemize}
\item $V_{(n,h)} \geq V_{(n+1,h)}$ and $H_{(n,h)} \leq H_{(n+1,h)}$;
%\item $H_{(n,h)} \leq H_{(n+1,h)}$;
\item there is $N \in \mathbb{N}$ such that $V_{(n,h)} = 0$ for all $n \geq N$ and $H_{(n,h)} = 0$ for all $n \leq -N$;
\item $V_{(n,h)} \to + \infty$ as $n \to - \infty$ and $H_{(n,h)} \to + \infty$ as $n \to + \infty$.
%\item $H_{(n,h)} \to + \infty$ as $n \to + \infty$.
\end{itemize}

By \cite[Lemma 12]{gainullin} the map $\boldsymbol{D}^T_{(i,h),p/q}$ is surjective (when $p/q>0$), so \linebreak $HF^+(Y_{p/q}(K),(i,h)) \cong \ker(\boldsymbol{D}^+_{(i,h),p/q})$.

If one of $\boldsymbol{A}^+_{(n,h)}$ is not a tower, i.e. contains some reduced part (which we denote by $\boldsymbol{A}^{red}_{(n,h)}$), then every element of $\boldsymbol{A}^{red}_{(n,h)}$ will be a component of some element of the kernel of $\boldsymbol{D}^+_{(i,h),p/q}$. However, such an element will not be in the image of large enough power of $U$. It follows that $HF^+(Y_{p/q}(K),(i,h))$ will have some reduced Floer homology. Thus if $Y_{p/q}(K)$ is an $L$-space, then $\boldsymbol{A}^+_{(n,h)} \cong \Ta$ for all $n$ and $h$.

Denote $\widehat{A}_{(n,h)} = \ker(U\co A^+_{(n,h)}\to A^+_{(n,h)})$ and its homology by $\widehat{\boldsymbol{A}}_{(n,h)}$. Since $\boldsymbol{A}^+_{(n,h)} \cong \Ta$ for all $n$ and $h$ we have $\widehat{\boldsymbol{A}}_{(n,h)} \cong \FF$ for all $n$ and $h$.

\subsection{Alexander polynomial}

Just as in the case of homology spheres, given a knot $K\subset Y$ in a rational homology sphere, one can define its Alexander module to be the first homology of the covering space $\widehat{Y}$ of $Y \setminus K$ with deck transformation group $\ZZ$. The two differences are as follows: firstly, to define $\widehat{Y}$ instead of the abelianisation map we use $\phi\co \pi_1(Y\setminus K)\to \ZZ$ gotten by composing abelianisation with the projection onto $\ZZ$ (so the subgroup defining $\widehat{Y}$ is the preimage of the torsion subgroup of $H_1(Y)$ under the abelianisation map). Secondly, for a more convenient definition of the Alexander polynomial later we use the ring $\QQ[t,t^{-1}]$ instead of $\ZZ[t,t^{-1}]$.

With these changes, the method for obtaining the presentation matrix for $H_1(\widehat{Y})$ as a $\QQ[t,t^{-1}]$-module using Fox calculus works in the same way as for knots in homology spheres -- see \cite[Chapter 11]{lickorishIntroBook}.

Now the Alexander polynomial $\Delta_K$ is defined to be a specific generator of the ideal generated by the maximal size minors of the presentation matrix of the Alexander module. The specific generator is fixed by the requirement that $\Delta_K(t) = \Delta_K(t^{-1})$ and $\Delta_K(1) = |H_1(Y)|$.

Suppose we have a genus $g$ doubly-pointed Heegaard diagram for $K\subset Y$. To get a Heegaard diagram of the knot exterior we add one more $\alpha$-curve, $\alpha_{g+1}$. As described in \cite[Section 3]{rasmussenThesis} this leads to a presentation of $\pi_1(Y)$ in which there is one generator for each $\alpha$-curve and one relator for each $\beta$-curve. Denote the generators by $\{a_i\}_{i=1}^{g+1}$ and relators (words in $a_i$) by $\{w_j\}_{j=1}^g$. Denote the free differential with respect to $a_i$ by $d_{a_i}$ (this time with respect to the map $\phi$, not abelianisation).

Define 

$$
\widehat{HFK}(Y,K,n) = \bigoplus_{h \in H_1(Y)} \widehat{HFK}(Y,K,(n,h)).
$$

Then as in \cite[Section 3]{rasmussenThesis} we can see that

$$
\chi(\widehat{HFK}(Y,K)) = \sum_{i,j} (-1)^it^j\dim\widehat{HFK}_i(Y,K,j) = \det(d_{a_i}w_j)_{1\leq i,j\leq g}.
$$

Following \cite[Proposition 3.1]{rasmussenLsp} we see that in fact

$$
\chi(\widehat{HFK}(Y,K)) = \Delta_K(t).
$$

For every $h \in H_1(Y)$\footnote{This definition works only up to an affine identification, since we should really have $h \in \mathrm{Spin}^c(Y)$.} define

$$
\Delta_{K,h}(t) = \sum_{i,j} (-1)^it^j\dim\widehat{HFK}_i(Y,K,(j,h)).
$$

We have $\Delta_K(t) = \sum_h \Delta_{K,h}(t)$ and $\Delta_{K,h}(1) = 1$.

Let $Y$ be an $L$-space and $K$ a null-homologous knot in it with $L$-space surgery. Recall that $\widehat{\boldsymbol{A}}_{(n,h)} \cong \FF$ for all $n$ and $h$. Using the same algebraic manipulations as in \cite[Section 3]{OSzLensSpaceSurg} we deduce that, for each fixed $h$, $\widehat{HFK}(Y,K,(n,h))$ has dimension $0$ or $1$, successive copies of $\FF$ are concentrated in different $\ZZ_2$ gradings and the first (and the last) copies of $\FF$ are concentrated in grading $0$.

It follows that, for each $h$, $\Delta_{K,h}''(1) \geq 0$ and equality is only possible if $\Delta_{K,h}(t) = 1 \mbox{ or } t$.

\subsection{Knots determined by their complements}

We are now ready to prove the surgery characterisation of the unknot for null-homologous knots in rational homology $L$-spaces.

\begin{theorem}
Let $Y$ be an $L$-space and $K\subset Y$ a null-homologous knot. Suppose that
$$
HF^+(Y_{p/q}(K)) \cong HF^+(Y \# L(p,q)).
$$

Then $K$ is the unknot.

In particular, null-homologous knots in $L$-spaces are determined by their complements.
\label{theorem-unknot-characterisation}
\end{theorem}

\begin{proof}
The Casson-Walker invariant is additive under connected sums and by \eqref{lambda-hf} determined by Heegaard Floer homology, thus we have

$$
\lambda(Y) + \lambda(L(p,q)) = \lambda(Y_{p/q}(K)) = \lambda(Y) + \lambda(L(p,q))+\frac{q}{2p|H_1(Y)|}\Delta_K''(1).
$$

It follows that $\Delta_K''(1) = 0$. Thus for each $h$ we must have $\Delta_{K,h}(t) = 1 \mbox{ or } t$. However, by symmetry if there is a multiple of $t$ in $\Delta_K(t)$ there must also be a multiple of $t^{-1}$. Hence $\Delta_{K,h}(t) = 1$ for all $h$. Note that it also means that 
$$
\widehat{HFK}(Y,K,(n,h)) = 0
$$

for all $n \neq 0$. Hence by \cite[Theorem 2.2]{niWuRatGenus} $g(K) = 0$, i.e. $K$ is the unknot.
\end{proof}

A straightforward homological argument provides more restrictions on knots not being determined by their complements in lens spaces. In particular, all knots in lens spaces $L(p,q)$ with $p$ square-free satisfy Conjecture \ref{conj-complement}. For notation used in the statement below see Section \ref{sec:intro} or the proof. By a non-trivial surgery we mean a surgery with a slope that is not the meridian, so even if a slope is equivalent to the meridian, surgery with this slope is still non-trivial.

It is clear that cores of Heegaard solid tori of $L(p,q)$ admit non-trivial surgery which give back $L(p,q)$. Hence, in the Corollary below we assume that $K$ is not a core of one of the Heegaard solid tori.

\begin{cor}
If $p$ is square-free, then all knots in $L = L(p,q)$ are determined by their complements.

More precisely, let $K$ be a knot whose exterior is not a solid torus and such that a non-trivial surgery on it gives $L$. Then the exterior of $K$ is not Seifert fibred, $p|w^2$ and the surgery slope, $n$, is an integer that satisfies the following (with some choice of sign):
$$
n = -q\frac{w^2}{p}\pm 1.
$$

Moreover, there is at most one such slope (i.e. we can choose either $+$ or $-$ but not both in the equation above).
\label{cor-lens-space-complements}
\end{cor}

\begin{proof}
By Theorem \ref{theorem-unknot-characterisation} we only need to consider non-null-homologous knots. Let $L = L(p,q)$ be a lens space and $K$ a non-null-homologous knot in it.

Suppose the exterior of $K$ is not Seifert fibred. Then by the Cyclic Surgery Theorem \cite{CGLS} the slope has to be integral.

We can isotope $K$ into one of the Heegaard solid tori $W$ of $L$. Then we can get $L$ by first performing an integral surgery on $K$ in $W$ and then glueing the other solid torus from the outside so that its meridian becomes the $(p,q)$-curve.

Let $\mu$ be the meridian of $W$, fix a curve $\lambda$ in $\partial W$ with $\lambda \cdot \mu = 1$ and embed $W$ into $S^3$ in the standard way with respect to $\mu$ and $\lambda$. This endows $K$ with a well-defined longitude $l$. Let $m$ be the meridian of $K$. Suppose $K$ has winding number $w$ in $W$. Then in $H_1(W\setminus K)$ (which is generated by $m$ and $\lambda$) $l = w\lambda$ and $\mu = wm$. Let $n$ be the surgery slope with respect to these coordinates. Then surgery on $K$ introduces a relation $nm+w\lambda = 0$. The other Heegaard solid torus introduces a relation $-qwm+p\lambda$. All in all, the first homology of $L$ has presentation matrix
$$
\begin{pmatrix} n & w \\ -qw & p \\ \end{pmatrix}.
$$

The order of the first homology of $L$ is the absolute value of the determinant of the relation matrix. Thus we must have $\pm p = np+qw^2 \Rightarrow n = -q\frac{w^2}{p} \pm 1$. However, $q$ is coprime with $p$, so $p|w^2$. By the Cyclic Surgery Theorem the distance between slopes that that give lens spaces is at most one, so $-q\frac{w^2}{p} + 1$ and $-q\frac{w^2}{p} - 1$ cannot both produce a lens space.

If $p$ is square-free $p|w^2$ implies that $p|w$, so $K$ is null-homologous in $L$ -- a contradiction.

Now suppose the exterior of $K$ is Seifert fibred. As stated, Conjecture \ref{conj-complement} has been proven for knots with Seifert fibred exteriors in \cite[Theorem 1]{rong}.

However, to demonstrate that there are no non-trivial slopes equivalent to the meridian we will provide a different proof.

By \cite[Lemma 2]{rong} we can assume that $K$ is a fibre in some fibration of $L$.

Fibrations of lens spaces come in two families (see e.g. \cite[Theorem 2.3]{hatcher3mflds}). All lens spaces can be fibred over a sphere with at most two exceptional fibres. There are also some lens spaces that can be fibred over the projective plane with one exceptional fibre of invariant $(n,1)$.

In the first (and most common) case, to avoid the exterior being a solid torus, $K$ must be an ordinary fibre in a fibration with two exceptional fibres. Suppose the invariants of these fibres are $(p_1,x_1)$ and $(p_2,x_2)$. Since Seifert fibred spaces with three exceptional fibres are not lens spaces, surgery on $K$ must introduce a fibre with invariant $(1,n)$. If so, the order of homology changes from $|p_1x_2+p_2x_2|$ to $|p_1x_2+p_2x_2+np_1p_2|$. Equality is only possible if $p_1=p_2=2$, i.e. $L$ is obtained by filling the Seifert fibred space over a disc with two exceptional fibres with the same invariant $(2,1)$.

Different fillings of this space that produce lens spaces can be indexed by integers $m\in \ZZ$ and they produce $L(4m,2m+1)$. If $L(4m,2m+1) = L(4n,2n+1)$ for $m\neq n$ (equality sign here means `there exists an orientation-preserving homeomorphism'), then $m=-n$, i.e. the two lens spaces are $L(4m,2m\pm 1)$. If they are to be homeomorphic by an orientation preserving homeomorphism, then we must have $(2m+1)(2m-1) \equiv 1\ (\mbox{mod } 4m)$ (see \cite{przytyckiYasukhara} and references therein) which is not true (even though this spaces \slshape are \upshape homeomorphic by an orientation-reversing homeomorphism).

This deals with the case when $K$ is a fibre of a fibration of $L$ over a sphere. $L$ may also have a fibration with one exceptional fibre over the projective plane in which case the invariant of the exceptional fibre is $(n,1)$. In this case $L = L(4n,2n+1)$. If $K$ is the exceptional fibre, then the only non-trivial surgery which still gives a lens space yields $L(4m,2m+1)$ for $n\neq m$. This case has been dealt with above. Similarly, if $K$ is an ordinary fibre then surgeries on it are indexed by an integer $k$ and their effect is to change the invariant of the exceptional fibre to $(n, nk+1)$. It follows that we must have $nk+1 = -1$, so surgery again gives $L(4n, 2n-1)$.
\end{proof}

\bibliographystyle{alpha}  
\bibliography{biblio}

\end{document}